\newtheorem{theorem}{Theorem}[section]
\newtheorem{proposition}{Proposition}[section]
\newtheorem{corollary}{Corollary}[section]
\newtheorem{lemma}{Lemma}[section]
\newtheorem{definition}{Definition}[section]
\newtheorem{remark}{Remark}[section]
\title{Taylor polynomials on left-quotients of Carnot groups}
\author{Alessandro Ottazzi }
\date{November 2025}
\begin{document}

\maketitle
\begin{abstract}
    We prove classical Taylor polynomial theorems for sub-Riemannian manifolds that are obtained as the submetric image of a Carnot group. For these theorems we also prove a sufficient condition for real analyticity and a result on $L$-harmonicity of  Taylor polynomials. 
\end{abstract}
\section{Introduction}
In the acclaimed work~\cite{RotStein}, the authors studied hypoelliptic operators defined by vector fields on a manifold $M$ with the property of bracket-generating the whole tangent space at every point. 
We summarise two fundamental results proved in that article in the following statement.
\begin{theorem}[Rothschild and Stein]
Let $X_1,\dots,X_n$ be vector fields on a manifold $M$ such that the brackets of length $\leq r$ span the tangent space at every point. Then there exist lifts
$$
\tilde X_k = X_k +\sum_{j=1}^\ell
f(x,t)\frac{\partial}{\partial t_j}$$
to $M\times \mathbb R^\ell$ such that:
\begin{enumerate}
    \item 
They
are free up to step $r$ at every point.
\item W.r.t. a suitable coordinate system, 
$$
\tilde X_k = Y_k +R_k, \quad k=1,\dots,n,
$$
where $Y_1,\dots,Y_n$ are generators of the nilpotent free Lie algebra of step $r$.
\end{enumerate}
\end{theorem}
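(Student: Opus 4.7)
The plan is to proceed in three stages: first I would construct the lift by adding auxiliary variables to compensate for the algebraic relations satisfied by the iterated brackets of $X_1,\dots,X_n$; then I would verify freeness up to step $r$ at every point; finally I would exhibit a coordinate system in which the lifted fields coincide with the canonical generators of the free nilpotent Lie algebra of step $r$ up to a lower-order remainder.

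For the lift, I would fix $x_0\in M$ and choose iterated brackets $X_{I_1},\dots,X_{I_m}$ of length at most $r$ that form a basis of $T_{x_0}M$. Let $d_{n,r}$ denote the dimension of the free nilpotent Lie algebra $\mathfrak{f}_{n,r}$ on $n$ generators of step $r$, and set $\ell=d_{n,r}-m$. Fixing a Hall-type basis $\{I_1,\dots,I_{d_{n,r}}\}$ of $\mathfrak{f}_{n,r}$, I would construct the extra coefficients appearing in $\tilde X_k=X_k+\sum_{j=1}^{\ell} f_k^j(x,t)\,\partial_{t_j}$ by induction on the length $|I_k|$, choosing them so that the extra iterated brackets $\tilde X_{I_{m+1}},\dots,\tilde X_{I_{d_{n,r}}}$ evaluated at $(x_0,0)$ acquire prescribed and linearly independent $\partial_{t_j}$ components. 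Freeness at $(x_0,0)$ is then by construction, and freeness in a neighborhood follows from continuity of determinants.

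For the normal form, I would use canonical coordinates of the first kind on the simply connected nilpotent Lie group $F_{n,r}$ associated with $\mathfrak{f}_{n,r}$. Concretely, define $\Phi:F_{n,r}\to M\times\RR^\ell$ near the identity by
\[
\Phi\Bigl(\exp\bigl(\sum_{j} a_j Y_{I_j}\bigr)\Bigr)=\exp\Bigl(\sum_{j} a_j \tilde X_{I_j}\Bigr)(x_0,0),
\]
which is a local diffeomorphism by~(1), and pull the fields $\tilde X_k$ back through $\Phi$. Comparing the Baker--Campbell--Hausdorff expansions of $\Phi^*\tilde X_k$ and of $Y_k$ in the weighted coordinates compatible with the natural dilations on $F_{n,r}$ shows that $\Phi^*\tilde X_k-Y_k$ is a vector field whose coefficients vanish to strictly higher weighted order; this supplies the desired remainder $R_k$.

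The hard part will be this last step. While the construction of $\Phi$ and the identification of the principal term $Y_k$ are formally straightforward, rigorously controlling the weighted order of $R_k$ requires careful bookkeeping of how iterated Lie brackets and the Baker--Campbell--Hausdorff series interact with the homogeneous dilations on $F_{n,r}$, together with an inductive argument on the filtration showing that all higher brackets contribute only to the remainder. Ensuring that the coefficients $f_k^j$ produced in the inductive construction remain smooth and globally defined on the intended neighborhood, while still realizing the desired $\partial_{t_j}$ components in every iterated bracket, is the main technical obstacle.
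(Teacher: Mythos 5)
First, a point of reference: the paper does not prove this statement at all --- it is quoted as a summary of results from \cite{RotStein}, so there is no internal proof to compare your attempt against. Your outline does follow the same general route as the original Rothschild--Stein argument: adjoin auxiliary variables to produce a free system, then pass to exponential coordinates modelled on the free nilpotent group $F_{n,r}$ and identify the principal part of each lifted field with the corresponding canonical generator.

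As a proof, however, the proposal has two genuine gaps, located exactly at the steps you yourself defer. (i) The inductive construction of the coefficients $f_k^j$ is asserted rather than carried out, and this is where the lifting theorem is nontrivial: when you add $\partial_{t_j}$ components to $X_k$ so that the length-$j$ brackets acquire prescribed values at $(x_0,0)$, the cross terms in the brackets of the modified fields feed back into the lower-length brackets you have already arranged, and you must show that the induction closes; you also need the prescribed components to be consistent with the antisymmetry and Jacobi relations that the brackets necessarily satisfy, which is why the later simplified proofs (Goodman, H\"ormander--Melin, Folland) build the lift from a partial homomorphism into the free group rather than by pointwise prescription. Continuity of determinants does propagate freeness to a neighborhood, but only once the construction at the base point is actually completed. (ii) The claim that $\Phi^*\tilde X_k - Y_k$ has coefficients vanishing to strictly higher weighted order \emph{is} the approximation theorem; it requires a quantitative Baker--Campbell--Hausdorff and Taylor analysis in the privileged coordinates together with an induction on the filtration, none of which is supplied. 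So the proposal is an accurate map of the terrain, but the two steps it leaves open are precisely the two assertions of the theorem.
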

In the generality considered by Rothschild and Stein, one has to lift the vector fields to a manifold with symbol a free nilpotent Lie algebra, which is essentially the largest once the number of generators and the step are given. However, it is clear that in explicit examples, one can try to minimise the dimension of the manifold in which the lifted vector fields live. See, for example, \cite{CLOW}, where a specific lifting is proposed and Taylor polynomial approximations are defined to define Hardy spaces in some non-equiregular sub-Riemannian manifolds.
In this article we aim to generalise and clarify the method used in \cite{CLOW} (and other articles in preparation by the author and his collaborators) to obtain Taylor approximation theorems. We start with vector fields in a stratified nilpotent Lie group and we project them onto sub-Riemannian manifolds. The crucial property of the projections that we consider is that they are submetries: metric balls of a fixed radius are mapped onto metric balls of the same radius. This allows to transfer several analytic properties of stratified Lie groups to possibly non-equiregular sub-Riemannian manifolds, where the span of the horizontal vector fields bracket-generating the tangent space may have different dimensions at different points, or the growth vector may be different at different points. 
A similar setup has been recently considered by~\cite{Bon2}.

The first application that we provide here is a Taylor polynomial theorem for functions on the projected manifolds, adapted to the sub-Riemannian structure. This is achieved by taking the definitions and theorems on this topic proved in~\cite{FollandStein} and transfer them by means of the submetry. After establishing some preliminaries on stratified Lie groups in Section~\ref{prel}, we will define the quotient manifolds and submetries in Section~\ref{projectedmanifolds}. We then tackle the problem of Taylor polynomials in Section~\ref{TaylorPol}.

Therefore, we will transfer two more results that are linked to the Taylor polynomial theory: a sufficient condition for a function to be real analytic and the $L$-harmonicity of Taylor polynomials of $L$-harmonic functions. These results will follow from the analogous results for stratified groups in~\cite{Bonfiglioli}. We will do this in Section~\ref{app}.

\section{Preliminaries}\label{prel}
We review the definitions of Carnot groups.
We first consider stratified Lie groups and their Lie algebras from an algebraic point of view, and then equip a stratified Lie group with the sub-Riemannian (or Carnot--Carath\'eodory) distance to obtain a Carnot group.
We then give the definition of sub-Riemannian manifold.

\subsection{Carnot groups}
Let $G$ be a stratified Lie group of step $s$.
This means that $G$ is connected and simply connected, and its Lie algebra $\mathfrak{g}$ admits an $s$-step stratification:
\[
\mathfrak{g}= \mathfrak{g}_{1}\oplus \cdots\oplus \mathfrak{g}_{s},
\]
where $[\mathfrak{g}_{j}, \mathfrak{g}_{1}] =\mathfrak{g}_{j+1}$ when $1\leq j \leq s$, while  $\mathfrak{g}_{s}\neq \{0\}$ and $\mathfrak{g}_{s+1}=\{0\}$; this implies that $\mathfrak{g}$ is nilpotent.
To avoid degenerate cases, we assume that the dimension of $G$ is at least $3$.
The identity of $G$ is written $e$, and we view the Lie algebra $\mathfrak{g}$ as the tangent space at the identity $e$.

For each $\lambda \in \mathbb R^+$, the dilation $\delta_\lambda : \mathfrak{g} \to \mathfrak{g}$ is defined by setting $\delta_\lambda(v):=t^j v$ for every $v\in \mathfrak{g}_{j}$  and every $j=1,\dots,s$, and then extending to $\mathfrak{g}$ by linearity.
The dilation  $\delta_\lambda$ is an automorphism of $\mathfrak{g}$.
We also write $\delta_\lambda$ for the automorphism of $G$ given by $\mathrm{exp} \circ {\delta_\lambda} \circ \mathrm{exp}^{-1}$.



We denote by $L_g$ the left translation by $g$ in $G$, that is, $L_g h = g h$ for all $h\in G$.
Each $v$ in $\mathfrak{g}$ then induces a left-invariant vector field  equal to $(L_g)_*(v)$ at each point $g \in G$.
The set $\overleftarrow{{\mathfrak{g}}}$ of all left-invariant vector fields with vector field commutations is isomorphic to $\mathfrak{g}$, and it inherits the  stratification of $\mathfrak{g}$.
The subbundle  $H  G$ of the tangent bundle $TG$, where $H_gG= (L_g)_*(\mathfrak{g}_{1})$, is called the {horizontal distribution}. 
We will denote left invariant vector fields on $G$ by $\tilde X$. 
If $\tilde X\in \overleftarrow{{\mathfrak{g_j}}}$, then $\tilde X$ is homogeneous of degree $j$, in the sense that $\tilde X(F\circ \delta_\lambda)=\lambda^{j} \tilde X \, F\circ \delta_\lambda$, with $F:G\to \mathbb R$ a smooth function.
We fix a scalar product $\left<\cdot,\cdot\right>$ on $\mathfrak{g}_{-1}$, and define a left-invariant scalar product on each horizontal space $H_gG$ by setting
\begin{align}
\left< v, w \right>_g = \left< (L_{g^{-1}})_*(v),(L_{g^{-1}})_*(w) \right>  \label{scalarprod}
\end{align}
for all $v, w \in H_gG$.
The left-invariant scalar product gives rise to a left-invariant \emph{sub-Riemannian} or  \emph{Carnot--Carath\'eodory} distance function $d_G$ on $G$.
To define this, we first say that a smooth curve $\gamma$ is  \emph{horizontal} if $\dot\gamma(t)\in H_{\gamma(t)}G$ for every $t$.
Then we define the distance $d_G(g,g')$ between points $g$ and $g'$ by
\[
d_G(g, g') = \inf\int_0^1 \bigl( \left<  \dot\gamma(t), \dot\gamma(t) \right>_{\gamma(t)} \bigr)^{1/2}  \,dt ,
\]
where the infimum is taken over all horizontal curves $\gamma: [0, 1] \to G$ such that $\gamma(0) = g$ and $\gamma(1) = g'$.
The stratified group $G$, equipped with the distance $d_G$, is called  a {Carnot group}. It is clear from the definition that left translations are isometries for $d_G$. Moreover, $d_G(\delta_\lambda(g),\delta_\lambda(g'))=\lambda \, d_G(g,g')$.
\subsection{Sub-Riemannian manifolds}
Let $M$ be a smooth manifold and let $ X_1,\dots, X_r$ be a set of smooth vector fields that bracket-generate the tangent space $T_pM$ at every $p$. For all $p\in M$ and $v\in T_pM$, define
$$
\|v\|^2_p = \inf \left\{u_1^2+\dots +u_r^2 \,:\, u_1 X_1 +\dots+u_m^2 X_r=v\right\}.
$$
We then define the sub-Riemannian distance $d_M$ on $M$ by
$$
d_M(p,q) = \inf\int_0^1  \|  \dot\gamma(t)\|_{\gamma(t)}   \,dt ,
$$
where the infimum is taken over all absolutely continuous curves $\gamma :[0,1]\to M$ such that $\gamma(0)=p$, $\gamma(1)=q$, and $\dot\gamma(t)\in {\rm span}\{X_1(\gamma(t)),\dots, X_r(\gamma(t))\}$ for almost every $t\in [0,1]$. The pair $(M,d_M)$ is called sub-Riemannian manifold.
If the set of vector fields $X_1,\dots, X_r$ has constant rank, then the $M$ is said equiregular. In this case, if the growth vector of the distribution $X_1,\dots, X_r$ is constant, then $M$ is a uniformly equiregular sub-Riemannian manifolds. Carnot groups fall in this latter class. 

We will denote by $B_d(x,\rho)$ the  ball centered at a point $x$ and of radius $\rho$ with respect to a distance $d$.

\section{Right cosets and submetries}\label{projectedmanifolds}

Let $G$ be a Carnot group and $H$ a closed subgroup of $G$. We consider the quotient  $H\backslash G = \{Hg\mid g\in G\}$ of right cosets, and we define the distance
$$
d(Hg_1,Hg_2):= \inf \{d_G(h_1g_1,h_2g_2)\mid h_1,h_2\in H\}= \inf \{d_G(g_1,hg_2)\mid h\in H\},
$$
where the last equality is a consequence of the fact that $d_G$ is left-invariant.
The canonical projection $\pi: G\to H\backslash G$ is a submetry (see, e.g., \cite{LeDonneBook}), namely 
$$
\pi ( B_{d_G}(g,\rho)) =  B_d(\pi(g),\rho),\qquad \forall g\in G, \rho>0.
$$
We will assume throughout that $\delta_\lambda(H) = H$.  
 In this case the dilations on $G$ project to diffeormorphisms on $H\backslash G$. Since $\delta_\lambda(Hg) = H\delta_\lambda(g)$, we see that
\begin{align*}
d(\delta_\lambda(Hg_1),\delta_\lambda(Hg_2))&=
\inf \{d_G(\delta_\lambda(g_1),h\delta_\lambda(g_2))\mid h\in H\}\\
&=
\inf \{\lambda\,  d_G(g_1,hg_2)\mid h\in H\}\\
&=
\lambda\inf \{ d_G(g_1,hg_2)\mid h\in H\}\\
&= \lambda \, d(Hg_1,Hg_2).
\end{align*}
Let $\{w_1,\dots,w_\ell\}$ be a basis of $\mathfrak h$, the Lie algebra of $H$, made of eigenvectors of $\delta_\lambda$, and complete this to a basis of $\mathfrak g$ by adding $v_1,\dots,v_m$, also eigenvectors of $\delta_\lambda$. This means that every vector in this basis is in $\mathfrak g_j$ for some $j=1,\dots,s$.
Since $G$ is nilpotent, the map
\begin{equation}   \label{expcoords}
(y_1,\dots,y_\ell,x_1,\dots,x_m) \mapsto \exp \left(\sum_{i=1}^\ell y_iw_i\right) \exp \left(\sum_{j=1}^m x_jv_j\right).
\end{equation}
is a diffeomorphism from the Lie algebra onto $G$. 
We  identify the points in $G$ using these exponential coordinates of the second kind. The identity of $G$ will then be the zero vector, denoted by $0$.
We then consider the slice $ M=\{g\in G\,:\, y_1=\dots=y_\ell=0\}\simeq \mathbb R^m$.
The manifold $M$ is a global smooth cross-section for the projection, and therefore $\pi\big|_{ M}:  M\to H\backslash G$ is a diffeomorphism. 
Define a distance function on $M$ by $d_M = d\circ \pi$. 
Note that this distance is not equivalent to the distance induced by restricting the Carnot distance of $G$.

Let $\Pi = (\pi\big|_M)^{-1}\circ \pi:G\to M$.
Clearly $\Pi$ is a submetry. We take as its right inverse the inclusion mapping $(0,\dots,0,x_1,\dots,x_m)$ to itself, denoted $\iota$.
In particular, the following holds (see~\cite{LeDonneBook}).
\begin{proposition}\label{submetrylift}
    For every $p,q\in M$ and $g_0\in G$ so that $\Pi(g_0)=q$, there is $g\in G$ with $\Pi(g)=p$ so that
  $$
  d_M(p,q) = d_G(g,g_0).
  $$
\end{proposition}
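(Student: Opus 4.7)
The plan is to deduce this from the general distance-lifting property of submetries: if $f:X\to Y$ is a submetry between metric spaces with $X$ proper and the fibres of $f$ closed, then for every $x_0\in X$ and every $y\in Y$ there exists $x\in f^{-1}(y)$ with $d_X(x,x_0)=d_Y(f(x_0),y)$. Applied to $f=\Pi:G\to M$ with $x_0=g_0$ and $y=p$, this yields exactly the desired $g$; the principle is stated in~\cite{LeDonneBook}. Before invoking it one should check that $\Pi:(G,d_G)\to(M,d_M)$ is itself a submetry, which follows from $\Pi=(\pi|_M)^{-1}\circ\pi$, the fact that $\pi|_M:(M,d_M)\to(H\backslash G,d)$ is an isometry by the very definition $d_M=d\circ\pi$, and the submetry property of $\pi$ recalled in the excerpt.

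To carry out the argument I would set $r=d_M(p,q)$. Applying the submetry equality $\Pi(B_{d_G}(g_0,r+1/n))=B_{d_M}(q,r+1/n)$ yields for each $n\geq 1$ an element $g_n\in G$ with $\Pi(g_n)=p$ and $d_G(g_n,g_0)<r+1/n$. The sequence $\{g_n\}$ lies in the closed $d_G$-ball of radius $r+1$ around $g_0$. Since $(G,d_G)$ is proper, closed Carnot--Carath\'eodory balls in a Carnot group being compact by the ball--box estimate, some subsequence $g_{n_k}$ converges to a point $g\in G$. The fibre $\Pi^{-1}(p)$ coincides with the right coset $Hp$, which is closed because $H$ is closed, so $g\in Hp$ and $\Pi(g)=p$. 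Continuity of $d_G$ gives $d_G(g,g_0)\leq r$; the reverse inequality follows from $\Pi$ being $1$-Lipschitz (a consequence of the submetry identity), which gives $d_G(g,g_0)\geq d_M(\Pi(g),\Pi(g_0))=d_M(p,q)=r$. Combining the two inequalities yields the claimed equality.

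The only delicate step is the passage from approximate minimisers $g_n$ to an actual minimiser, which uses properness of the Carnot group and closedness of the fibre; both are standard, so no real obstacle arises. Everything else is a formal manipulation of the submetry axiom together with the identification $\Pi^{-1}(p)=Hp$.
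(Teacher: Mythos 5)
Your argument is correct. The paper itself offers no proof of this proposition: it simply asserts that $\Pi$ is a submetry and cites \cite{LeDonneBook} for the lifting property, so your write-up supplies exactly the standard details behind that citation --- extracting approximate lifts $g_n$ from the open-ball submetry identity, using properness of $(G,d_G)$ and closedness of the fibre $\Pi^{-1}(p)=Hp$ to pass to a limit, and getting the reverse inequality from the $1$-Lipschitz property of $\Pi$. All of these ingredients hold in this setting ($H$ is assumed closed, and closed Carnot--Carath\'eodory balls are compact), so there is no gap; your proof is just the expanded version of the route the paper takes by reference.
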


We will denote the left invariant vector fields in the coordinates introduced in $\eqref{expcoords}$ corresponding to the basis vectors $w_1,\dots,w_\ell$ by $\tilde Y_1,\dots,\tilde Y_\ell$ and those corresponding to the basis vectors $v_1,\dots,v_m$ by $\tilde X_1,\dots,\tilde X_m$. 
A vector field $X$ on 
$M$ is $\pi$-related to
a vector field $\tilde X$ on $G$ if  $d\pi(\tilde X)=d\pi(X)$.
If $f:M\to \mathbb R$ is a smooth function, then
 \begin{equation}\label{pushforward}
    Xf(p) = \Pi_* (\tilde X)f(\Pi(g)) = \tilde X (f\circ \Pi)(g),
\end{equation}
 with $p=\Pi(g)$.



\subsection{Examples}\label{examples}
We illustrate two significant examples where the quotient of a Carnot group is a sub-Riemannian manifold that displays some degeneracy. The first, known as the Grushin plane, has a sub-Riemannian structure of no constant rank. The second has constant rank but the growth vector depends on the point.

(1) Consider the $(\ell +1)$-step Lie algebra 
$$
\mathfrak g = {\rm span}\{v_1,v_2,w_1,\dots,w_\ell\}
$$
with non-zero Lie brackets
$$
[v_1,w_j] = w_{j+1}, \quad j=1,\dots,\ell-1
$$
and
$$
[v_1,w_\ell] = v_2.
$$
This is a filiform Lie algebra of step $\ell +1$.
Let $G$ be the connected and simply connected Lie groups with $\mathfrak g$ as Lie algebra. 
We use coordinates on $G$ such that 
$$
(y_1,\dots,y_\ell,x_1,x_2) = \exp \left(\sum_{i=1}^\ell y_iw_i\right) \exp \left(\sum_{j=1}^2 x_jv_j\right).
$$
We set $V_1 = {\rm span}\{v_1,w_1\}$. 
The corresponding left-invariant vector fields with respect to the chosen coordinates are
\begin{align*}
\tilde X_1 &= \frac{\partial}{\partial x_1}\\
\tilde Y_1 &=  \frac{\partial}{\partial y_1} + \sum_{k=1}^{\ell-1} \frac{x_1^k}{k!} \frac{\partial}{\partial y_{k+1}} + \frac{x_1^\ell}{\ell!}\frac{\partial}{\partial x_{2}}.
\end{align*}
Next, choose $\mathfrak h = {\rm span}\{w_1.\dots,w_\ell\}$. This is an abelian subalgebra of $\mathfrak g$. Let $H$ be the corresponding subgroup. The manifold of the right cosets $Hg$ is identified with $M=\mathbb R^2$ and the sub-Riemannian distance $d_M$  coincides with the sub-Riemannian distance defined by the vector fields
\begin{align*}
\Pi_*(\tilde X_1)&= X_1 = \frac{\partial}{\partial x_1}\\
\Pi_*(\tilde Y_1) &=  Y_1 =   \frac{x_1^\ell}{\ell!}\frac{\partial}{\partial x_{2}},
\end{align*}
which define a Grushin structure of order $\ell$ on the plane.
If $p,q\in M$, and $g_0 = \iota(q)$, then Proposition~\ref{submetrylift} implies that there is $g\in G$ with $\Pi(g)=p$ so that $d_M(p,q)= d_G(g,g_0)$.
Moreover,
if we define on $M$ the map $ \delta_\lambda$ by
$$
(x_1,x_2) \to (\lambda x_1, \lambda^{\ell +1} x_2),
$$
then $d_M( \delta_\lambda(p), \delta_\lambda(q))=\lambda d_M(p,q)$.

\vskip0.2cm 
(2) Let 
$\mathfrak{g}={\rm span} \{ v_1,\dots, v_3, w_1,\dots, w_3\}$ with
\begin{align*}
     w_1=&[ v_2, v_1], \quad  w_2=[ w_1, v_1], \quad  w_3=[ w_1, v_2],\\
    &[ w_2, v_1]=[ w_3, v_2]=8 v_3.
\end{align*}

Denote by $G$ the connected and simply connected Lie group with Lie algebra $\mathfrak{g}$. 
 We set  exponential coordinates 
\[
(y_1,y_2,y_3,x_1,x_2,x_3)= \exp\left(\sum_{j=1}^{3}y_j w_j\right)\exp\left(x_1 v_1+x_2 v_2+x_3 v_3\right).
\]
The left-invariant vector fields defining the sub-Riemannian $d_G$ are 
\begin{align*}
{\tilde X}_1 &=  \partial_{x_1}+ \frac{x_2}{2}\partial_{y_1}-\frac{1}{3}x_1x_2 \partial_{y_2}-\frac{1}{3}x^2_2 \partial_{y_3}+x_2(x_1^2+x_2^2) \partial_{x_3}
\\
{\tilde X}_2 &=  \partial_{x_2}- \frac{x_1}{2}\partial_{y_1}+\frac{1}{3}x^2_2 \partial_{y_2}+\frac{1}{3}x_1x_2 \partial_{y_3}-x_1(x_1^2+x_2^2) \partial_{x_3}.
\end{align*}

Let $\mathfrak{h}={\rm span} \{w_1,w_2,w_3\}$ and $H=\exp \mathfrak h$.
The quotient space $H\backslash G$ is identified with the three dimensional manifold $M$ whose points are $(x_1,x_2,x_3)\in \mathbb R ^3$. The distance $d_M$ is the sub-Riemannian distance defined by
 \[ 
 \Pi_*(\tilde X_1) = {X}_1= \partial_{x_1}+x_2(x_1^2+x_2^2) \partial_{x_3}, 
 \]
 \[ 
 \Pi_*(\tilde X_2)= \partial_{x_2} -x_1(x_1^2+x_2^2) \partial_{x_3},
 \]
    with $\Pi: G\to M$ the usual projection.
 The distribution defined by the vector fields $X_1$ and $X_2$ is a subbundle that defines a CR structure on $M$. Indeed, the latter can be identified with
\[\Omega= \left\{(z,w)\in {\mathbb C}  ^2: \Im w = \frac{1}{4}|z|^{4}\right\},
\]
with $z=x_1+ix_2$ and $\Re w=x_6$ (see, e.g., \cite{CRembedding, CLOW}).
Similarly to the previous example, the distance $d_M$ is homogeneous with respect to a family of dilations. Namely, $d_M\circ\delta_\lambda = \lambda\, d_M$ with $\delta_\lambda(x_1,x_2,x_3)=(\lambda x_1,\lambda x_2,\lambda^4 x_3)$.

\section{Taylor polynomials}\label{TaylorPol}


\subsection{Taylor polynomials on Carnot groups}
In this section we recall the definition and some facts about Taylor polynomials adapted to Carnot groups. For details, see \cite{FollandStein} and \cite{Bonfiglioli}.

Let $(G,d_G)$ be a Carnot group and $\mathfrak g$ be its Lie algebra.
We already observed that  $\tilde X\in \overleftarrow{{\mathfrak{g_{j}}}}$ is homogeneous of degree $j$, that is  $\tilde X(F\circ \delta_\lambda)=\lambda^{j} (\tilde X \, F)\circ \delta_\lambda$, with $F:G\to \mathbb R$ a smooth function. From the basis that we fixed in Section~\ref{projectedmanifolds} we select $r$ vector fields that form a basis of left invariant vector fields of the first layer. It will be convenient to denote these by $\tilde X_1,\dots,\tilde X_r$.
For any multi-index
 $I=(i_1,\dots,i_K)$, we denote by 
 $\tilde X^I$ the differential operator defined as the product of $K$ left invariant vector fields each of homogeneous degree $d(i_j)$. Then $\tilde X^I$ is homogeneous of degree $d(I) = d_{i_1}+\dots+d_{i_K}$.
We write $\tilde X_{\mathrm{hor}}^I$ if $d({i_j})=1$ for every $j$.
   We say that a function $F:G\to \mathbb R$ is $k$-times horizontally differentiable, and write $F\in C^k_{\mathrm{hor}}(G)$, if $\tilde X_{\mathrm{hor}}^I \, F$ is continuous on $G$ for every $I$ with $d(I)\leq k$.

\begin{definition} Let 
     $F: G\to \mathbb R$ be a $C_{\mathrm{hor}}^k(G)$ function.
    The McLaurin polynomial of homogeneous degree $k$ related to $F$ is the unique polynomial $\tilde P=\tilde P_k(F,0)$ such that 
    $\tilde X^IP(0)=\tilde X^If(0)$, with $d(I)\leq k$.
  For  $g_0\in G$ and $F\in C_{\mathrm{hor}}^k(G)$,  the polynomial
  $$
  \tilde P_k(F,g_0):= \tilde P_k(F\circ L_{g_0},0)\circ L^{-1}_{g_0}
  $$
is the Taylor polynomial of homogeneous degree $k$ related to $F$ around $g_0$.
\end{definition}

\begin{theorem}[Lagrange mean value theorem]\label{MeanValue}
There exist positive constants $c_1$ and $b$, depending only on the group $G$ and the distance $d_G$, such that for every $F\in C^1_{\mathrm{hor}}(G)$,
$$
|F(g g_0)-F(g)| \leq c_1 d_G(g_0,0) \sup \left\{|\tilde X_jF(g u)|\,:\, 1\leq j\leq r, d_G(u,0)\leq b\, d_G(g_0,0)\right\}.
$$
\end{theorem}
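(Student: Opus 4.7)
The plan is to reduce the estimate to the special case $g=0$ by left-invariance and then integrate $F$ along a near length-minimizing horizontal curve from $0$ to $g_0$. Setting $F_g := F\circ L_g$, left-invariance of each $\tilde X_j$ gives $(\tilde X_j F_g)(u) = (\tilde X_j F)(gu)$, and left-invariance of $d_G$ implies that the relevant ball translates accordingly, so the general statement follows immediately from the case $g=0$. Thus it suffices to prove
\begin{equation*}
|F(g_0) - F(0)| \leq c_1\, d_G(g_0, 0) \sup\{|\tilde X_j F(u)|: 1\leq j\leq r,\ d_G(u, 0) \leq b\, d_G(g_0, 0)\}.
\end{equation*}

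For this reduced case, I would fix $\varepsilon>0$ and, by the definition of $d_G$, choose a horizontal curve $\gamma\colon [0,1] \to G$ with $\gamma(0)=0$, $\gamma(1)=g_0$, velocity $\dot\gamma(t) = \sum_{j=1}^r a_j(t) \tilde X_j(\gamma(t))$ almost everywhere, and length at most $(1+\varepsilon)\, d_G(g_0, 0)$. After reparametrizing so that $(\sum_j a_j(t)^2)^{1/2}$ is a.e.\ equal to $\mathrm{length}(\gamma)$, every point $\gamma(t)$ lies in the closed ball $\overline{B}_{d_G}(0, (1+\varepsilon) d_G(g_0,0))$. Since $F \in C^1_{\mathrm{hor}}(G)$, the composition $t \mapsto F(\gamma(t))$ is absolutely continuous with derivative $\sum_j a_j(t) (\tilde X_j F)(\gamma(t))$, hence
\begin{equation*}
F(g_0) - F(0) = \int_0^1 \sum_{j=1}^r a_j(t)\, (\tilde X_j F)(\gamma(t))\, dt,
\end{equation*}
and Cauchy--Schwarz in the $j$-index combined with the length bound yields the desired inequality with $c_1 = \sqrt{r}$ and $b = 1+\varepsilon$. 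Letting $\varepsilon \to 0$ --- or invoking the existence of length-minimizing geodesics in Carnot groups --- gives the result with $b=1$.

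The only technical point is arranging $\gamma$ so that its entire trace lies in a controlled ball around $0$, which is ensured by the constant-speed reparametrization together with the elementary bound $d_G(\gamma(t), 0) \leq \mathrm{length}(\gamma|_{[0,t]}) = t\cdot\mathrm{length}(\gamma)$. Beyond that there is no conceptual obstacle: the argument is the direct sub-Riemannian analogue of the classical Lagrange mean value theorem, exploiting that horizontal curves realize (up to $\varepsilon$) the sub-Riemannian distance and that the decomposition of $\dot\gamma$ in the horizontal frame $\tilde X_1,\dots,\tilde X_r$ is precisely what pairs with $\tilde X_j F$ in the fundamental theorem of calculus.
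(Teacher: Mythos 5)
The paper does not actually prove Theorem~\ref{MeanValue}: it is recalled verbatim from the literature (Folland--Stein, and Bonfiglioli), where the standard argument writes $g_0$ as a controlled product $\exp(t_1 \tilde X_{j_1})\cdots\exp(t_N \tilde X_{j_N})$ of horizontal exponentials with $N$ bounded and $|t_i|\lesssim d_G(g_0,0)$, and then telescopes, applying the one-variable fundamental theorem of calculus along each flow line $s\mapsto F(u\exp(s\tilde X_{j_i}))$. Your reduction to $g=0$ by left-invariance is correct and matches that scheme, but your main step --- integrating along a near-minimizing horizontal curve --- is a genuinely different route, and it contains one real gap. You assert that for $F\in C^1_{\mathrm{hor}}(G)$ and an absolutely continuous horizontal curve $\gamma$ with $\dot\gamma=\sum_j a_j\tilde X_j(\gamma)$, the map $t\mapsto F(\gamma(t))$ is absolutely continuous with derivative $\sum_j a_j(t)\,(\tilde X_jF)(\gamma(t))$. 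This is immediate if $F$ is $C^1$ in the ordinary sense, but the hypothesis here is only that the horizontal derivatives $\tilde X_jF$ exist and are continuous; $F$ need not be differentiable in the non-horizontal directions, so the classical chain rule does not apply. Establishing this chain rule along arbitrary horizontal curves for merely horizontally $C^1$ functions is a nontrivial fact (essentially of the same depth as the theorem itself), and it is precisely what the Folland--Stein decomposition into single-field flow lines is designed to avoid: along $s\mapsto F(u\exp(s\tilde X_j))$ the identity $\frac{d}{ds}F(u\exp(s\tilde X_j))=(\tilde X_jF)(u\exp(s\tilde X_j))$ holds by the very definition of $\tilde X_jF$, and continuity of $\tilde X_jF$ then licenses the fundamental theorem of calculus.

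Two smaller points. First, your constant bookkeeping gives $c_1=\sqrt{r}\,(1+\varepsilon)$ rather than $\sqrt{r}$, since the length bound $(1+\varepsilon)d_G(g_0,0)$ enters both the Cauchy--Schwarz factor and the radius of the ball; this is harmless (fix $\varepsilon=1$, or invoke existence of minimizing geodesics in Carnot groups, as you do, to get $b=1$). Second, if you repair the chain-rule step --- for instance by first proving the estimate for $F\in C^\infty$ and then passing to $C^1_{\mathrm{hor}}$ by mollification, or by citing the chain rule for horizontally $C^1$ functions along horizontal curves --- your argument does yield the theorem, and arguably with cleaner constants than the product-of-exponentials proof, at the price of relying on deeper regularity facts. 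As written, though, the absolute-continuity claim is the missing ingredient and cannot be waved through.
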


\begin{theorem}[Taylor theorem]\label{Taylor}
    For every positive integer $k$ there is a constant $C_k$ such that for all $F\in C^k_{\mathrm{hor}}(G)$ and all $g,g_0\in G$,
    $$
    |F(g g_0)-\tilde P_k(F,g)(g g_0)| \leq C_k d_G(g_0,0)^k
    \tilde \eta_G(g,b^k d_G(g_0,0)),
    $$
where $b$ is as in the previous theorem and 
$$
\tilde \eta_G(g,b^k d_G(g_0,0)) = \sup\left\{|\tilde X_{\mathrm{hor}}^IF(gu)-\tilde X_{\mathrm{hor}}^IF(g)|\,:\, d(I)=k, \, d_G(u,0)\leq b^kd_G(g_0,0)\right\}.
$$   
\end{theorem}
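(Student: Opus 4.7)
The plan is to prove Theorem~\ref{Taylor} by induction on $k\geq 1$, applying Theorem~\ref{MeanValue} at each step to the remainder
\[
R_k(h):=F(h)-\tilde P_k(F,g)(h),
\]
which by the defining property of the Taylor polynomial satisfies $\tilde X^J R_k(g)=0$ for every horizontal multi-index $J$ with $d(J)\leq k$; in particular $R_k(g)=0$.

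For the base case $k=1$, I would apply Theorem~\ref{MeanValue} to $R_1$ at the pair $(g,g_0)$. Since $\tilde P_1(F,g)$ has homogeneous degree at most $1$, each $\tilde X_j\tilde P_1(F,g)$ is a constant, equal by construction to $\tilde X_j F(g)$. Hence $\tilde X_j R_1(gu)=\tilde X_j F(gu)-\tilde X_j F(g)$, whose absolute value is bounded by $\tilde\eta_G(g,b\,d_G(g_0,0))$ as soon as $d_G(u,0)\leq b\, d_G(g_0,0)$. This gives the claim with $C_1=c_1$.

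For the inductive step, suppose the result holds for $k-1$. The crucial intermediate claim is the identity
\[
\tilde X_j \tilde P_k(F,g)=\tilde P_{k-1}(\tilde X_j F,g),\qquad j=1,\dots,r,
\]
which holds because $\tilde X_j\tilde P_k(F,g)$ is a polynomial of homogeneous degree at most $k-1$ (as $\tilde X_j$ lowers weighted degree by $1$) and because $\tilde X^J(\tilde X_j\tilde P_k(F,g))(g)=\tilde X^{(J,j)}F(g)=\tilde X^J(\tilde X_j F)(g)$ for every horizontal $J$ with $d(J)\leq k-1$; the uniqueness of the Taylor polynomial then forces the equality. Consequently $\tilde X_j R_k=\tilde X_j F-\tilde P_{k-1}(\tilde X_j F,g)$, and applying the inductive hypothesis to $\tilde X_j F$ with $u$ in place of $g_0$ yields
\[
|\tilde X_j R_k(gu)|\leq C_{k-1}\,d_G(u,0)^{k-1}\tilde\eta_G(g,b^{k-1}d_G(u,0))\leq C_{k-1}b^{k-1}d_G(g_0,0)^{k-1}\tilde\eta_G(g,b^k d_G(g_0,0)),
\]
using $d_G(u,0)\leq b\, d_G(g_0,0)$, the monotonicity of $\tilde\eta_G$ in its second argument, and the observation that a horizontal differentiation of $\tilde X_j F$ of weight $k-1$ is the same as a horizontal differentiation of $F$ of weight $k$. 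Substituting into Theorem~\ref{MeanValue} for $R_k$ produces the desired bound with the recursion $C_k=c_1 b^{k-1}C_{k-1}$.

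The main obstacle is the displayed identity $\tilde X_j\tilde P_k(F,g)=\tilde P_{k-1}(\tilde X_j F,g)$. Its proof rests on two structural facts: the homogeneity of left-invariant horizontal vector fields (lowering weighted degree by one), noted in Section~\ref{projectedmanifolds}, and the uniqueness of the Taylor polynomial among polynomials of weighted degree $\leq k$. Uniqueness can be read off from the triangular form $\tilde X_j=\partial_{x_j}+\textrm{(strictly higher weighted-order terms)}$ in the exponential coordinates of Section~\ref{projectedmanifolds}, which shows that the map $P\mapsto (\tilde X^I P(0))_{d(I)\leq k}$ is injective on polynomials of weighted degree $\leq k$. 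Once this is in place, the remainder of the argument is bookkeeping of the constants $c_1$ and $b$ through the induction.
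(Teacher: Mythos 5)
Your argument is correct, but note that the paper itself does not prove Theorem~\ref{Taylor}: it is recalled verbatim from the cited sources (\cite{FollandStein}, \cite{Bonfiglioli}), so there is no internal proof to compare against. What you have written is essentially the classical Folland--Stein induction (their Theorem~1.42): the base case via the stratified mean value theorem applied to $R_1$ with $R_1(g)=0$, and the inductive step via the identity $\tilde X_j \tilde P_k(F,g)=\tilde P_{k-1}(\tilde X_j F,g)$, yielding the recursion $C_k=c_1 b^{k-1}C_{k-1}$. Two small points deserve to be made explicit. First, the defining conditions of $\tilde P_k$ in this paper range over \emph{all} multi-indices $I$ with $d(I)\le k$, not only horizontal ones; your uniqueness argument is still fine because every basis field of degree $d$ is a combination of iterated brackets of $d$ horizontal fields, so matching all horizontal derivatives of weight $\le k-1$ at $g$ forces matching of all $\tilde X^J$ with $d(J)\le k-1$. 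Second, to place $\tilde X_j\tilde P_k(F,g)$ in the class where uniqueness of $\tilde P_{k-1}(\tilde X_jF,g)$ applies, you need the standard fact that left translation preserves the space of polynomials of homogeneous degree $\le k$ (so that $\tilde P_k(F,g)=\tilde P_k(F\circ L_g,0)\circ L_g^{-1}$ has homogeneous degree $\le k$, and $\tilde X_j$ then lowers it to $\le k-1$); you use this implicitly and should cite or state it.
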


\begin{corollary}[Lagrange remainder]
   With the notation of Theorom~\ref{Taylor} and $F\in C^{k+1}_{\mathrm{hor}}(G)$, there is $C_k'$ so that 
   $$
    |F(g g_0)-\tilde P_k(F,g)(g g_0)| \leq C'_k d_G(g_0,0)^{k+1}
   \sup\left\{|\tilde X_{\mathrm{hor}}^IF(gu)|\,:\, d(I)=k+1, \, d_G(u,0)\leq b^{k+1}d_G(g_0,0)\right\}.
    $$ 
\end{corollary}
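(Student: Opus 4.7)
The plan is to bootstrap from the qualitative form of the Taylor theorem (Theorem~\ref{Taylor}) at order $k$, and then absorb the oscillation remainder into a pointwise bound on $(k{+}1)$-th order horizontal derivatives via the mean value theorem (Theorem~\ref{MeanValue}). Under the hypothesis $F\in C^{k+1}_{\mathrm{hor}}(G)$, every horizontal derivative $\tilde X^I_{\mathrm{hor}}F$ with $d(I)=k$ is itself in $C^1_{\mathrm{hor}}(G)$, so the mean value theorem applies to it.

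First I would apply Theorem~\ref{Taylor} (with the same $k$ as in the statement) to obtain
\[
|F(gg_0)-\tilde P_k(F,g)(gg_0)|\le C_k\,d_G(g_0,0)^k\,\tilde\eta_G(g,b^k d_G(g_0,0)),
\]
so the task reduces to estimating $|\tilde X^I_{\mathrm{hor}}F(gu)-\tilde X^I_{\mathrm{hor}}F(g)|$ for each multi-index $I$ with $d(I)=k$ and each $u$ with $d_G(u,0)\le b^kd_G(g_0,0)$. Writing $gu=(g)\cdot u$ and using left invariance, Theorem~\ref{MeanValue} applied to the scalar function $\tilde X^I_{\mathrm{hor}}F$ yields
\[
|\tilde X^I_{\mathrm{hor}}F(gu)-\tilde X^I_{\mathrm{hor}}F(g)|\le c_1\,d_G(u,0)\,\sup\bigl\{|\tilde X_j\tilde X^I_{\mathrm{hor}}F(gv)|:1\le j\le r,\ d_G(v,0)\le b\,d_G(u,0)\bigr\}.
\]

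Next I would chain the radii: since $d_G(u,0)\le b^k d_G(g_0,0)$, the factor in front is at most $c_1 b^k d_G(g_0,0)$, and the admissible $v$ satisfy $d_G(v,0)\le b\cdot b^k d_G(g_0,0)=b^{k+1}d_G(g_0,0)$. Because $\tilde X_j$ is a horizontal left-invariant vector field and $\tilde X^I_{\mathrm{hor}}$ already has all its factors in the first layer, the composite $\tilde X_j\tilde X^I_{\mathrm{hor}}$ is a horizontal differential operator $\tilde X^J_{\mathrm{hor}}$ with $d(J)=k+1$. Substituting this estimate back into Theorem~\ref{Taylor} produces
\[
|F(gg_0)-\tilde P_k(F,g)(gg_0)|\le C_k c_1 b^k\,d_G(g_0,0)^{k+1}\sup\bigl\{|\tilde X^J_{\mathrm{hor}}F(gv)|:d(J)=k+1,\ d_G(v,0)\le b^{k+1}d_G(g_0,0)\bigr\},
\]
so one takes $C'_k:=C_kc_1b^k$.

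There is essentially no hard step here: the argument is a two-line combination of the two preceding theorems. The only point requiring care is the bookkeeping of the dilation radius $b^k\mapsto b^{k+1}$, and the observation that composing a degree-$1$ horizontal field with a degree-$k$ horizontal monomial yields precisely a degree-$(k{+}1)$ horizontal monomial, which is what allows the sup on the right-hand side to be taken over $\tilde X^I_{\mathrm{hor}}$ rather than over arbitrary degree-$(k{+}1)$ products.
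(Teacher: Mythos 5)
Your argument is correct. Note that the paper itself gives no proof of this corollary: it is recalled from \cite{FollandStein} (and \cite{Bonfiglioli}) as a known fact about Carnot groups, and the standard derivation there is exactly the one you give, namely apply Theorem~\ref{Taylor} at order $k$ and then control the oscillation $|\tilde X_{\mathrm{hor}}^I F(gu)-\tilde X_{\mathrm{hor}}^I F(g)|$, for $d(I)=k$, by Theorem~\ref{MeanValue} applied to $\tilde X_{\mathrm{hor}}^I F\in C^1_{\mathrm{hor}}(G)$. Your bookkeeping of the radii ($b\cdot b^k=b^{k+1}$), the identification of $\tilde X_j\tilde X_{\mathrm{hor}}^I$ as a horizontal monomial of homogeneous degree $k+1$, and the resulting constant $C_k'=C_k c_1 b^k$ are all in order.
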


\begin{corollary}[Peano reminder]
   If $F\in C^{k+1}_{\mathrm{hor}}(G)$, then
   $$
   F(g) = \tilde P_k(F,g_0)(g) + O_{g_0}(d_G(g,g_0)^{k+1}).
   $$
\end{corollary}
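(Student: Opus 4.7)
The plan is to read off the Peano bound directly from the Lagrange remainder corollary, after reparametrising the point $g$ by left translation so that the displacement sits at the identity. Specifically, I would set $h := g_0^{-1}g$, so that $g = g_0 h$ and, by left-invariance of $d_G$, $d_G(g,g_0) = d_G(h,0)$. Applying the Lagrange remainder with base point $g_0$ and displacement $h$ then yields
$$
|F(g) - \tilde P_k(F,g_0)(g)| \leq C'_k \, d_G(g,g_0)^{k+1} \sup\left\{|\tilde X_{\mathrm{hor}}^I F(g_0 u)| : d(I)=k+1,\; d_G(u,0)\leq b^{k+1} d_G(g,g_0)\right\}.
$$

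To finish, I would control the remaining supremum by local boundedness. Since $F\in C^{k+1}_{\mathrm{hor}}(G)$, every horizontal derivative $\tilde X_{\mathrm{hor}}^I F$ with $d(I)=k+1$ is continuous, hence bounded on any fixed compact neighbourhood $U$ of $g_0$. Setting
$$
M_{g_0} := \max_{d(I)=k+1}\; \sup_{g'\in U} |\tilde X_{\mathrm{hor}}^I F(g')|,
$$
I observe that for $g$ sufficiently close to $g_0$ the constraint $d_G(u,0)\leq b^{k+1}d_G(g,g_0)$ confines $u$ to a small metric ball around $0$, so $g_0 u$ stays in $U$ and the supremum is dominated by $M_{g_0}$. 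Combining this with the inequality above produces
$$
|F(g) - \tilde P_k(F,g_0)(g)| \leq C'_k M_{g_0}\, d_G(g,g_0)^{k+1},
$$
which is precisely the $O_{g_0}(d_G(g,g_0)^{k+1})$ bound claimed.

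There is no serious obstacle here: the only item that demands attention is the verification that the set over which the supremum is taken shrinks into a fixed neighbourhood of $g_0$ as $g\to g_0$, and this is immediate since the Carnot ball $B_{d_G}(0, b^{k+1}d_G(g,g_0))$ has vanishing radius. The entire quantitative content is already packaged in the Lagrange remainder, and hence ultimately in Theorem~\ref{Taylor}; the present corollary is just the qualitative reading of that estimate, with the dependence of the implicit constant on $g_0$ accounting for the chosen neighbourhood $U$.
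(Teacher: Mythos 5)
Your derivation is correct and is the standard one: the paper states this corollary without proof (it is recalled from Folland--Stein/Bonfiglioli), and deducing it from the Lagrange remainder by the substitution $h=g_0^{-1}g$ together with local boundedness of the continuous derivatives $\tilde X_{\mathrm{hor}}^I F$, $d(I)=k+1$, on a compact neighbourhood of $g_0$ is exactly the intended argument. The only point worth making explicit is that closed $d_G$-balls are compact (the Carnot--Carath\'eodory distance is proper and induces the manifold topology), which justifies taking the finite bound $M_{g_0}$ on $U$.
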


\subsection{Taylor polynomials for quotients and theorems.}

Let $(M,d_M)$ be a sub-Riemannian manifold obtained as the image $\Pi(G)$ of a Carnot group $G$ and subgroup $H$, as defined in Section~\ref{projectedmanifolds}.
For any multi-index
 $I=(i_1,\dots,i_K)$, we denote by 
 $X^I$ the differential operator defined as the product of $K$ images by $\Pi_*$ of left invariant vector fields on $G$, each of homogeneous degree $d(i_j)$ (note that some of these operators may be zero). 
We write $X_{\mathrm{hor}}^I$ if the vector fields involved are images through $\Pi_*$ of vector fields of homogeneous degree $1$.
   We say that a function $f:M\to \mathbb R$ is $k$-times horizontally differentiable, and write $f\in C^k_{\mathrm{hor}}(M)$, if $ X_{\mathrm{hor}}^I \, f$ is continuous on $M$ for every $I$ with $d(I)\leq k$.
A polynomial of degree $k$ is a function $P:M\to \mathbb R$ so that $ X_{\mathrm{hor}}^I P$ is not identically zero for some $I$ with $d(I)=k$ and $ X_{\mathrm{hor}}^I P=0$ for all multi-index such that $d(I)\geq k+1$.
   
We will write $X_j = \Pi_*(\tilde X_j), j=1,\dots,r$.
For convenience we write  
\(\psi:=\pi|_M:M\to H\backslash G\) and identify \(M\) and \(H\backslash G\) via \(\psi\).
Write \(\bar X_j\) for the vector field on \(H\backslash G\) defined by 
$
d\pi(\tilde X_j) = \bar X_j\circ\pi.
$
We say that a function $F:G\to \mathbb R$ is left $H$-invariant if $F(hg)=F(g)$ for every $h\in H$.

\begin{definition}[Taylor polynomials on \(M\)]
\label{def:intrinsic-taylor}
Let \(f\in C_{\mathrm{hor}}^{k}(M)\) and \(q\in M\).  
The {Taylor polynomial of degree \(k\)} of \(f\) at \(q\), denoted \(P_{k}(f,q)\), is the unique polynomial \(P:M\to\mathbb R\)  such that
\[
X_{\mathrm{hor}}^I P(q) = X_{\mathrm{hor}}^I f(q)
\qquad\text{for every multi-index }I\text{ with }d(I)\le k.
\]
If $q=0$, we will call $P$ the McLaurin polynomial.
\end{definition}
The following Lemma shows that the definition of Taylor polynomial on $M$ is well posed.
\begin{lemma}
\label{lem:unique-intrinsic}
Let $Q$ be a  polynomial on $M$ of degree $\le k$.
If
\(
X_{\mathrm{hor}}^I Q(q)=0\) for all multi-indices  \(I\) with  \(d(I)\le k,
\)
then $Q\equiv 0$ on $M$.
\end{lemma}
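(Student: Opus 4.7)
The plan is to lift the problem from $M$ to $G$ via the submetry $\Pi$ and reduce to the analogous uniqueness statement on Carnot groups, which is classical. Set $\tilde Q:=Q\circ\Pi:G\to\mathbb R$. Applying \eqref{pushforward} iteratively along the horizontal generators $\tilde X_1,\dots,\tilde X_r$ yields, for every multi-index $I$ with horizontal entries,
\[
\tilde X_{\mathrm{hor}}^I \tilde Q \;=\; \bigl(X_{\mathrm{hor}}^I Q\bigr)\circ\Pi.
\]
Because $Q$ is a polynomial on $M$ of degree at most $k$, the definition gives $X_{\mathrm{hor}}^I Q\equiv 0$ whenever $d(I)\ge k+1$, so the same vanishing transfers to $\tilde Q$ on all of $G$.

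Next I would verify that $\tilde Q$ is a polynomial of homogeneous degree at most $k$ on $G$ in the Folland--Stein sense. Two routes are available. The direct one uses that the horizontal generators bracket-generate $\mathfrak g$: every basis element of $\mathfrak g_j$ can be written as an iterated bracket of $j$ horizontal generators, and each such iterated bracket, as a differential operator on smooth functions, expands into a finite sum of horizontal monomials $\tilde X_{\mathrm{hor}}^J$ with $d(J)=j$. Consequently, the vanishing of $\tilde X_{\mathrm{hor}}^I\tilde Q$ for $d(I)\ge k+1$ implies the vanishing of $\tilde X^I\tilde Q$ for $d(I)\ge k+1$ with $\tilde X^I$ an arbitrary product of left-invariant vector fields, which is the Folland--Stein characterisation of polynomials of homogeneous degree $\le k$. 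Alternatively, one can invoke the Lagrange form of Theorem~\ref{Taylor} at the origin to force $\tilde Q$ to coincide with its McLaurin polynomial of degree $k$.

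Now pick any $g_0\in G$ with $\Pi(g_0)=q$; the existence of such $g_0$ is provided by the section $\iota$. The lifting identity together with the hypothesis give
\[
\tilde X_{\mathrm{hor}}^I\tilde Q(g_0)\;=\;X_{\mathrm{hor}}^I Q(q)\;=\;0
\qquad\text{for all } d(I)\le k.
\]
Thus $\tilde Q$ is a polynomial of homogeneous degree at most $k$ on $G$ whose horizontal left-invariant derivatives of order $\le k$ all vanish at $g_0$. By the Carnot-group uniqueness of Taylor polynomials (a standard consequence of the uniqueness of McLaurin polynomials in \cite{FollandStein}, or equivalently \cite{Bonfiglioli}), this forces $\tilde Q\equiv 0$ on $G$. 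Since $\Pi\circ\iota=\mathrm{id}_M$, we get $Q=\tilde Q\circ\iota\equiv 0$ on $M$.

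The main obstacle is the middle step: certifying that the naive lift $\tilde Q=Q\circ\Pi$ is genuinely a Carnot-group polynomial, so that the classical uniqueness theorem on $G$ applies. Once that identification is made, everything else is just the lift/descent dictionary furnished by \eqref{pushforward} and the existence of a section $\iota$ to $\Pi$.
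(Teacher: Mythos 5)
Your proof is correct and follows essentially the same route as the paper's: lift $Q$ to $\tilde Q = Q\circ\Pi$ via the identity $\tilde X^I\tilde Q = (X^I Q)\circ\Pi$, invoke uniqueness of Taylor/McLaurin polynomials on the Carnot group $G$, and restrict back to $M$. The one place you add value is the middle step — verifying via bracket-generation that $\tilde Q$ really is a polynomial of homogeneous degree $\le k$ in the Folland--Stein sense, and that vanishing of the horizontal derivatives of degree $\le k$ at $g_0$ forces vanishing of all $\tilde X^I\tilde Q(g_0)$ — which the paper asserts without justification.
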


\begin{proof}
Let $\tilde Q := Q\circ \Pi$.
Then $\tilde Q$ is a left $H$--invariant homogeneous polynomial on $G$.
For any multi-index $I$ and any representative $g_0$ of $q$, we have
\[
\tilde X^I \tilde Q(g_0) = X^I Q(q).
\]
By assumption, $\tilde X_{\mathrm{hor}}^I \tilde Q(g_0)=0$ for all $|I|\le k$.
By the uniqueness of Taylor polynomials on $G$, this implies $\tilde Q\equiv 0$.
Restricting to $M$ yields $Q\equiv 0$ on $M$.
\end{proof}

In the next proposition we relate the Taylor polynomials on $M$ to those on $G$. This will be crucial for proving remainder estimates.

\begin{proposition}
\label{prop:compatibility}
Let \(f\in C_{\mathrm{hor}}^{k}(M)\) and \(\tilde f := f\circ\Pi\).
Fix \(q\in M\) and choose any \(g_0\in G\) with \(\Pi(g_0)=q\).
Let \(\tilde P_k(\tilde f,g_0)\) denote the  Taylor polynomial of homogeneous degree \(k\) for \(\tilde f\) at \(g_0\).
Then:
\begin{enumerate}
\item \(\tilde P_k(\tilde f,g_0)\) is left \(H\)-invariant and it defines a unique polynomial \(\bar P_k\) on \(H\backslash G\).
\item The restriction of \(\bar P_k\) to \(M\) (via the identification \(\psi\)) coincides with the Taylor polynomial on \(M\), namely
\[
P_{k}(f,q) \;=\; \bar P_k\circ\psi \;=\; \tilde P_k(\tilde f,g_0)\big|_{M}.
\]
\item The polynomial \(\tilde P_k(\tilde f,g_0)\) (and hence \(\bar P_k\) and \(P_k(f,q)\)) is independent of the choice of representative \(g_0\) with \(\Pi(g_0)=q\).
\end{enumerate}
\end{proposition}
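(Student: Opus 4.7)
The plan is to prove (1) by induction on $k$ and then deduce (2) and (3) from it, using the change-of-base-point identity $\tilde P_k(F\circ L_a,g_0)=\tilde P_k(F,ag_0)\circ L_a$ for any $a\in G$ (itself a direct consequence of the definition of $\tilde P_k$ and uniqueness on the group). The key preliminary fact is that $\tilde f=f\circ\Pi$ is left $H$-invariant by construction, and since left-invariant vector fields commute with every left translation, for every multi-index $I$ the derivative $\tilde X^I\tilde f$ is also left $H$-invariant.

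For part (1), the case $k=0$ is immediate since $\tilde P_0(\tilde f,g_0)=\tilde f(g_0)$ is a constant. In the inductive step set $\tilde P:=\tilde P_k(\tilde f,g_0)$. For each horizontal basis field $\tilde X_j$, uniqueness on $G$ yields $\tilde X_j\tilde P=\tilde P_{k-1}(\tilde X_j\tilde f,g_0)$, which is left $H$-invariant by the inductive hypothesis applied to the $H$-invariant function $\tilde X_j\tilde f$. Therefore $\tilde X_j\bigl(\tilde P\circ L_h-\tilde P\bigr)=0$ for every horizontal $\tilde X_j$ and every $h\in H$, and since the horizontal vector fields bracket-generate $TG$ on the connected group $G$, the difference $c(h):=\tilde P\circ L_h-\tilde P$ is a real constant depending only on $h$. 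The main obstacle is to show $c\equiv 0$. Here I would apply Theorem~\ref{Taylor} at $g_0$ with increment $g_0^{-1}hg_0$ and use $\tilde f(hg_0)=\tilde f(g_0)=\tilde P(g_0)$ to obtain $|c(h)|=o\bigl(d_G(g_0^{-1}hg_0,0)^{k}\bigr)$ as $h\to e$; then exploit $\delta_\lambda(H)=H$ to substitute $h=\delta_\lambda h_0$, so that the left-hand side becomes the polynomial $\sum_{j\ge 1}\lambda^j P_j(h_0)$ arising from the homogeneous decomposition of $\tilde P\big|_H$. Matching this expansion against the $o$-estimate on the right and using that $\operatorname{Ad}(g_0^{-1})$ preserves the filtration of $\mathfrak g$ forces each $P_j(h_0)$ to vanish, closing the induction.

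Granted (1), part (2) is straightforward: the $H$-invariant polynomial $\tilde P_k(\tilde f,g_0)$ factors through $H\backslash G$ as a unique polynomial $\bar P_k$, and its restriction to $M$ via $\psi$ coincides with $\tilde P_k(\tilde f,g_0)\big|_M$. Iterating the pushforward identity~\eqref{pushforward} gives $\tilde X^I\tilde f=(X^If)\circ\Pi$ for every multi-index $I$, so the matching at $g_0$ on $G$ descends to matching at $q=\Pi(g_0)$ on $M$, and Lemma~\ref{lem:unique-intrinsic} then identifies $\bar P_k\circ\psi$ with $P_k(f,q)$. For part (3), given two representatives $g_0$ and $hg_0$ of $q$ with $h\in H$, the change-of-base-point identity applied to $F=\tilde f$ and $a=h$ (using $\tilde f\circ L_h=\tilde f$) yields $\tilde P_k(\tilde f,g_0)=\tilde P_k(\tilde f,hg_0)\circ L_h$, and combining this with the left $H$-invariance from (1) at the point $hg_0$ gives $\tilde P_k(\tilde f,hg_0)=\tilde P_k(\tilde f,g_0)$.
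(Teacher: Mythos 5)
Your reduction of part (1) is cleaner and more honest than the paper's own argument: the identity $\tilde X_j\tilde P_k(\tilde f,g_0)=\tilde P_{k-1}(\tilde X_j\tilde f,g_0)$, the induction, and the bracket-generation of the horizontal frame do show that $c(h):=\tilde P\circ L_h-\tilde P$ is a constant (indeed a homomorphism $H\to(\mathbb R,+)$), and your derivations of (2) and (3) from (1) agree with the paper's. The gap is exactly where you flag it, and the proposed fix does not close it. The Taylor estimate gives $|c(h)|=o\bigl(d_G(g_0^{-1}hg_0,0)^{k}\bigr)$, but for $h=\delta_\lambda h_0=\exp(\delta_\lambda w)$ the conjugate $g_0^{-1}\delta_\lambda(h_0)g_0=\exp\bigl(\mathrm{Ad}(g_0^{-1})\delta_\lambda w\bigr)$ picks up components in higher layers, whose contribution to the homogeneous norm is a fractional root; thus $d_G(g_0^{-1}\delta_\lambda(h_0)g_0,0)$ can decay as slowly as $\lambda^{j_0/s}$, and $o(\lambda^{k j_0/s})$ does not annihilate the low-order coefficients of the polynomial $\lambda\mapsto c(\delta_\lambda h_0)$ when $k j_0/s<j$.

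More importantly, the obstruction is not technical: $c(h)$ is genuinely nonzero in the paper's own Example (1) with $\ell=1$. There $G$ is the Heisenberg group with $\tilde X_1=\partial_{x_1}$, $\tilde Y_1=\partial_{y_1}+x_1\partial_{x_2}$, $H=\exp(\mathbb R w_1)$, and left $H$-invariance of a function means independence of $y_1$. Take $\tilde f=x_2$ (so $f=x_2$ on the Grushin plane) and $g_0=(0,1,0)$, $k=1$. The unique polynomial of homogeneous degree $\le 1$ matching $\tilde f(g_0)=0$, $\tilde X_1\tilde f(g_0)=0$ and $\tilde Y_1\tilde f(g_0)=x_1|_{g_0}=1$ is $\tilde P_1(\tilde f,g_0)=y_1$, which is not left $H$-invariant: $c(\exp(s w_1))=s$. (Consistently, on $M$ no polynomial of degree $\le 1$ satisfies $Y_1P(q)=Y_1f(q)=1$, since $Y_1=x_1\partial_{x_2}$ annihilates all of them, so the object of Definition~\ref{def:intrinsic-taylor} does not exist at $q=(1,0)$ either.) So claim (1) fails whenever a derivative $\tilde Y\tilde f(g_0)$ along $\overleftarrow{\mathfrak h}$ appearing in the matching conditions is nonzero; the paper's proof conceals this in the equality $\tilde X^I\tilde P_k(\tilde f,g_0)(hg_0)=\tilde X^I\tilde f(hg_0)$, which the definition of the Taylor polynomial only guarantees at $g_0$ itself, not at $hg_0$. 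No refinement of your asymptotic argument can prove $c\equiv 0$; the statement needs an additional hypothesis (or a modified notion of Taylor polynomial) before a proof can exist.
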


\begin{proof}
We prove the three claims in order.

\medskip\noindent{(1)}
Since \(\tilde f = f\circ\Pi\) is constant along left \(H\)-orbits, \(\tilde f\) is left \(H\)-invariant.
For any fixed \(h\in H\) consider the left translate \(\tilde Q(g):=\tilde P_k(\tilde f,g_0)(hg)\).
By left--invariance of the frame \(\tilde X_j\) we have, for every multi-index \(I\),
\[
\tilde X^I \tilde Q(g_0) = \tilde X^I \tilde P_k(\tilde f,g_0)(hg_0)
= \tilde X^I \tilde f(hg_0) = \tilde X^I \tilde f(g_0),
\]
where the last equality uses left \(H\)-invariance of \(\tilde f\) and its derivatives.
Hence \(\tilde Q\) satisfies in particular the same Taylor--matching equations at \(g_0\) as \(\tilde P_k(\tilde f,g_0)\), and by uniqueness of the Taylor polynomial at \(g_0\) on $G$ we obtain \(\tilde Q=\tilde P_k(\tilde f,g_0)\).  This yields \(\tilde P_k(\tilde f,g_0)(hg)=\tilde P_k(\tilde f,g_0)(g)\) for all \(g\in G\) and all \(h\in H\). Hence the polynomial on $H\backslash G$ given by \(\bar P_k(Hg):=\tilde P_k(\tilde f,g_0)(g)\) is well defined.

\medskip\noindent{(2)}
Let \(Q := \bar P_k\circ\psi\), a polynomial on \(M\); equivalently \(Q(p)=\tilde P_k(\tilde f,g_0)(p)\) for \(p\in M\).
We check that \(Q\) is the Taylor polynomial of degree $k$ centred at \(q\) on $M$, namely it satisfies the conditions of Definition~\ref{def:intrinsic-taylor}.

Fix any multi-index \(I\) with \(d(I)\le k\).  For every sufficiently smooth function \(\bar f\) on \(H\backslash G\),
\[
\tilde X^I(\bar f\circ\pi) = ( \bar X^I\bar f)\circ\pi,
\]
where 
\(d\pi(\tilde X_j)=\bar X_j\circ\pi\).
Apply this identity to \(\bar f=\bar P_k\) and evaluate at \(g_0\):
\[
\tilde X^I \tilde P_k(\tilde f,g_0)(g_0)
= \tilde X^I(\bar P_k\circ\pi)(g_0)
= (\bar X^I\bar P_k)(\pi(g_0)).
\]
Pulling back via \(\psi^{-1}\) (i.e. identifying \(\pi(g_0)=\psi(q)\)), this yields
\[
X^I Q(q) = (\bar X^I\bar P_k)(\psi(q)) = \tilde X^I \tilde P_k(\tilde f,g_0)(g_0).
\]
By the defining property of \(\tilde P_k\) on \(G\),
\[
\tilde X_{\mathrm{hor}}^I \tilde P_k(\tilde f,g_0)(g_0) = \tilde X_{\mathrm{hor}}^I \tilde f(g_0).
\]
Finally, \(\tilde f=f\circ\Pi\) and \(\Pi(g_0)=q\) imply 
\[
\tilde X_{\mathrm{hor}}^I \tilde f(g_0) = X_{\mathrm{hor}}^I f(q).
\]
Combining the equalities gives \(X_{\mathrm{hor}}^I Q(q) = X_{\mathrm{hor}}^I f(q)\) for all \(d(I)\le k\).  Hence \(Q\) satisfies the intrinsic defining conditions of Definition~\ref{def:intrinsic-taylor} at \(q\). By uniqueness of the Taylor polynomial we conclude
\[
P_k(f,q) = Q = \bar P_k\circ\psi = \tilde P_k(\tilde f,g_0)\big|_{M},
\]
which proves the desired claim.

\medskip\noindent{(3)}
Let \(g_1\in G\) be another representative with \(\Pi(g_1)=q\).  Then \(g_1 = h g_0\) for some \(h\in H\).
From (1) we already know that \(\tilde P_k(\tilde f,g_0)\) is left \(H\)-invariant, hence
\(\tilde P_k(\tilde f,g_0)(g_1)=\tilde P_k(\tilde f,g_0)(g_0)\).  Moreover the same computation in (2) shows that \(\tilde P_k(\tilde f,g_0)\) satisfies the defining Taylor equations at \(g_1\) as well, namely
\[
\tilde X_{\mathrm{hor}}^I \tilde P_k(\tilde f,g_0)(g_1)
= \tilde X_{\mathrm{hor}}^I \tilde P_k(\tilde f,g_0)(hg_0)
= \tilde X_{\mathrm{hor}}^I \tilde f(hg_0)
= \tilde X_{\mathrm{hor}}^I \tilde f(g_1).
\]
By uniqueness of the Taylor polynomial at \(g_1\) we must have
\(\tilde P_k(\tilde f,g_1)=\tilde P_k(\tilde f,g_0)\).  
\end{proof}

We are now ready to prove our main theorems on Taylor polynomials on $M$. We prove a technical Lemma first.

\begin{lemma}
\label{lem:sup-transfer}

Let $\Phi:G\to\mathbb R$ be a left $H$-invariant function, and define
$\phi:M\to\mathbb R$ by
\[
\phi := \Phi\circ \iota,
\]
where $\iota:M\hookrightarrow G$ is the inclusion.
Then for every $q\in M$ and every $r>0$,
\begin{equation}\label{eq:sup-transfer}
\sup \left\{|\phi(p)|\,:\, p\in M, d_M(p,q)\le r\right\}
=
\sup \left\{|\Phi(g)|\,:\,g\in G, d_G(g,g_0)\le r\right\},
\end{equation}
where $g_0\in G$ is any point such that $\Pi(g_0)=q$.
\end{lemma}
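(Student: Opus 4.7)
The approach is to reduce the equality to a statement about balls and then invoke the two defining properties of the submetry $\Pi$. The key preliminary observation is that, since $\Phi$ is left $H$-invariant and $\iota$ is a right inverse of $\Pi$, one has the clean factorisation $\Phi = \phi \circ \Pi$. Indeed, for any $g\in G$ the point $\iota(\Pi(g))$ lies in the same left $H$-coset as $g$ (because $\Pi(\iota(\Pi(g)))=\Pi(g)$), so $H$-invariance gives $\Phi(g)=\Phi(\iota(\Pi(g)))=\phi(\Pi(g))$. With this identity in hand, both sides of \eqref{eq:sup-transfer} become suprema of the single function $|\phi|$ over two subsets of $M$, and it suffices to show that these two subsets coincide (or at least give the same supremum).

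For the inequality $\ge$, I would use that submetries are $1$-Lipschitz (which is immediate from $\pi(B_{d_G}(g_0,r))=B_d(\pi(g_0),r)$). Any $g$ with $d_G(g,g_0)\le r$ satisfies $d_M(\Pi(g),q)\le r$, and the value $|\Phi(g)|=|\phi(\Pi(g))|$ on the right is then realised by the point $\Pi(g)\in \bar B_{d_M}(q,r)$ on the left.

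For the inequality $\le$, the non-trivial direction, the input is Proposition~\ref{submetrylift}: given $p\in M$ with $d_M(p,q)\le r$, there exists a lift $g\in G$ with $\Pi(g)=p$ and $d_G(g,g_0)=d_M(p,q)\le r$. Then $|\phi(p)|=|\phi(\Pi(g))|=|\Phi(g)|$ is realised by $g\in \bar B_{d_G}(g_0,r)$ on the right. This is the only point where something stronger than Lipschitz continuity of $\Pi$ is needed, and it is precisely the content of the submetry lifting statement.

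I do not expect a serious obstacle here; the proof is essentially a two-line unwinding of the submetry property together with the $H$-invariance. The only subtlety to watch is using $H$-invariance at the correct step to ensure that $\phi$ and $\Phi$ compute the same numerical values, and that the lift produced by Proposition~\ref{submetrylift} preserves the inequality $\le r$ (which is automatic because the proposition yields the exact equality of distances, not merely an estimate).
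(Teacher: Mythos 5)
Your proof is correct and follows essentially the same route as the paper's: the $\le$ direction via the exact-distance lift of Proposition~\ref{submetrylift}, the $\ge$ direction via the $1$-Lipschitz property of the submetry, and $H$-invariance to identify $|\Phi|$ with $|\phi\circ\Pi|$ in both directions. Your explicit preliminary factorisation $\Phi=\phi\circ\Pi$ is a slightly cleaner packaging of what the paper does inline (writing $g=hg_p$), but the argument is the same.
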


\begin{proof}
\medskip\noindent
\emph{$\leq$.}
Let $p\in M$ satisfy $d_M(p,q)\le r$.
By Proposition~\ref{submetrylift}, there exists
$g_p\in G$ such that
\(
\Pi(g_p)=p\) and \(
d_G(g_p,g_0)=d_M(p,q)\le r.
\)
By definition of $\phi$ and left $H$-invariance of $\Phi$,
\[
|\phi(p)| = |\Phi(g_p)|.
\]
Hence
\[
\sup \left\{|\phi(p)|\,:\, p\in M, d_M(p,q)\le r\right\}
\le
\sup \left\{|\Phi(g)|\,:\,g\in G, d_G(g,g_0)\le r\right\}.
\]

\medskip\noindent
\emph{$\geq$.}
Conversely, let $g\in G$ satisfy $d_G(g,g_0)\le r$.
Set $p:=\Pi(g)\in M$.  Then, 
\[
d_M(p,q)\le d_G(g,g_0)\le r.
\]
Writing $g = h g_p$ for some $h\in H$ and some representative $g_p$ with $\Pi(g_p)=p$,
and using left $H$-invariance of $F$, 
\[
|\Phi(g)| = |\Phi(g_p)| = |\phi(p)|.
\]
Thus
\[
\sup \left\{|\Phi(g)|\,:\,g\in G, d_G(g,g_0)\le r \right\}
\le
\sup \left\{|\phi(p)|\,:\,p\in M, d_M(p,q)\le r \right\}.
\]
Combining the two inequalities yields \eqref{eq:sup-transfer}.
\end{proof}

In the sequel, we will use this lemma repeatedly to replace suprema over balls
in $M$ by equivalent suprema over balls in $G$, and vice versa.

\begin{theorem}[Lagrange mean value theorem for $M$]
\label{thm:Lagrange-quotient}
Given $f\in C^1_{\mathrm{hor}}(M)$, there exist positive constants $c_1$ and $b$,
depending only on $G$, $H$ and on the distance $d_M$, such that for all $p,q\in M$,
\begin{equation}\label{FormulaThmLagrange1}
\big| f(p)- f(q)\big|
\leq
c_1\, d_M(p,q)\,
\sup\left\{\big| X_j f(s)\big| \;:\; 1\leq j\leq r,\ d_M(s,q)\leq b\, d_M(p,q)\right\},
\end{equation}
where $\{X_1,\dots,X_r\}$ is the horizontal distribution on $M$ induced by the horizontal
left-invariant frame $\{\tilde X_1,\dots,\tilde X_r\}$ on $G$.
\end{theorem}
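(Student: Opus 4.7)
The idea is to lift everything to $G$, invoke Theorem~\ref{MeanValue} there, and then push the estimate back down to $M$ using Proposition~\ref{submetrylift} and Lemma~\ref{lem:sup-transfer}. Set $\tilde f := f\circ\Pi$, which belongs to $C^1_{\mathrm{hor}}(G)$ and is left $H$-invariant by construction. Given $p,q\in M$, pick any $g_0\in G$ with $\Pi(g_0)=q$; by Proposition~\ref{submetrylift} there exists $g\in G$ with $\Pi(g)=p$ and $d_G(g,g_0)=d_M(p,q)$.

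Theorem~\ref{MeanValue} is stated for pairs of the form $(g,g g_0)$, but using left-invariance of $d_G$ it applies equally to an arbitrary pair of points in $G$; with the representatives $g_0,g$ above it yields
\[
\bigl|\tilde f(g)-\tilde f(g_0)\bigr|
\le c_1\, d_G(g,g_0)\sup\Bigl\{|\tilde X_j\tilde f(g')|:1\le j\le r,\ d_G(g',g_0)\le b\,d_G(g,g_0)\Bigr\}.
\]
Since $\tilde f(g)=f(p)$, $\tilde f(g_0)=f(q)$, and $d_G(g,g_0)=d_M(p,q)$, the left-hand side and the prefactor already match the statement; what remains is to rewrite the supremum as a supremum over an $M$-ball around $q$. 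For this, note that $\Phi_j:=\tilde X_j\tilde f$ is left $H$-invariant: the left-invariance of $\tilde X_j$ gives $(\tilde X_j F)\circ L_h=\tilde X_j(F\circ L_h)$ for every smooth $F$ and every $h\in G$, and with $F=\tilde f$ and $h\in H$ this produces $\Phi_j\circ L_h=\Phi_j$. By the pushforward identity \eqref{pushforward} the restriction $\Phi_j\circ\iota$ equals $X_j f$ on $M$. Applying Lemma~\ref{lem:sup-transfer} to $\Phi_j$ with radius $b\,d_M(p,q)$ identifies the $G$-supremum above with the $M$-supremum in \eqref{FormulaThmLagrange1}, and the constants $c_1,b$ inherited from Theorem~\ref{MeanValue} depend only on $G$ and $d_G$, hence only on $G, H$ and $d_M$.

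I do not foresee a substantial obstacle; the proof is a short chain of identifications. The two points that deserve a moment of care are the rewriting of Theorem~\ref{MeanValue} so that it applies to the pair $(g_0,g)$ rather than $(g,gg_0)$, which is immediate from the left-invariance of $d_G$, and the verification that $\tilde X_j\tilde f$ is left $H$-invariant so that Lemma~\ref{lem:sup-transfer} is applicable; both are handled by the left-invariance of the frame $\{\tilde X_j\}$ on $G$.
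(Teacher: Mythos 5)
Your proposal is correct and follows essentially the same route as the paper: lift $f$ to $\tilde f=f\circ\Pi$, use Proposition~\ref{submetrylift} to realise $d_M(p,q)$ as $d_G(g,g_0)$, apply Theorem~\ref{MeanValue} on $G$, and transfer the supremum via Lemma~\ref{lem:sup-transfer}. If anything, you are slightly more careful than the paper on two points it leaves implicit: the reformulation of Theorem~\ref{MeanValue} for an arbitrary pair of points, and the verification that $\tilde X_j\tilde f$ is left $H$-invariant so that Lemma~\ref{lem:sup-transfer} applies to the derivatives (the paper's proof nominally applies the lemma with $\phi=f$, $\Phi=\tilde f$, where it should be $\phi=X_jf$, $\Phi=\tilde X_j\tilde f$ as you have it).
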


\begin{proof}
Define the left $H$--invariant lift
\(
\tilde f := f\circ \Pi \in C^1_{\mathrm{hor}}(G).
\)
Fix $q\in M$ and choose $g_0\in G$ such that $\Pi(g_0)=q$.
By Proposition~\ref{submetrylift} (the submetry property of $\Pi$), for every $p\in M$ there exists
$g\in G$ with
\(
\Pi(g)=p,
\) and \(
d_G(g,g_0) = d_M(p,q).
\)
Next, observe that for each $j$ and $u\in G$ we have the relation
\[
\tilde X_j \tilde f(u)
= \tilde X_j(f\circ \Pi)(u)
= X_j f(\Pi(u)).
\]
Hence, Apply \eqref{eq:sup-transfer}
to $r=b\, d_G(g,g_0)$, $\phi=f$, and $\Phi=\tilde f$. Then
the inequality~\eqref{FormulaThmLagrange1} is equivalent to
\[
\big|\tilde f(g)-\tilde f(g_0)\big|
\leq
c_1\, d_G(g,g_0)\,
\sup\left\{\big| \tilde X_j \tilde f(u)\big| \;:\; 1\leq j\leq r,\ d_G(u,g_0)\leq b\, d_G(g,g_0)\right\},
\]
which is precisely the conclusion of the Lagrange mean value theorem on $G$ given in Theorem~\ref{MeanValue},
applied to $\tilde f\in C^1_{\mathrm{hor}}(G)$.
Thus the inequality~\eqref{FormulaThmLagrange1} on $M$ follows from Theorem~\ref{MeanValue} on $G$,
with the same constants $c_1,b$.
\end{proof}


\begin{theorem}[Taylor Theorem for $M$]\label{TaylorQuotients}
For every positive integer $k$ there exist constants $C_k>0$ and $b>0$ such that
for all $f\in C^k_{\mathrm{hor}}(M)$ and all $p,q\in M$,
\begin{equation}\label{eq:Taylor-M}
\big| f(p)- P_k(f,q)(p)\big|
\leq
C_k\, d_M(p,q)^k\,
\eta_M\big(q,b^k d_M(p,q)\big),
\end{equation}
where $P_k(f,q)$ is the  Taylor polynomial of degree $k$ of $f$ at $q$,
and
\[
\eta_M(q,r)
:=
\sup\big\{
\big| X_{\mathrm{hor}}^I f(n)-X_{\mathrm{hor}}^I f(q)\big|
\;\big|\;
d(I)=k,\ d_M(n,q)\le r
\big\}.
\]
\end{theorem}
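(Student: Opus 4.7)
The plan is to reduce the statement to the Taylor theorem on $G$ (Theorem~\ref{Taylor}) applied to the lift $\tilde f := f\circ\Pi$, and then translate the resulting estimate back to $M$ using the three transfer tools already established: Proposition~\ref{submetrylift} for distances, Proposition~\ref{prop:compatibility} for the Taylor polynomial, and Lemma~\ref{lem:sup-transfer} for the modulus of continuity $\tilde\eta_G$ versus $\eta_M$.

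First I would fix $p,q\in M$, choose $g_0\in G$ with $\Pi(g_0)=q$, and invoke Proposition~\ref{submetrylift} to produce $g\in G$ with $\Pi(g)=p$ and $d_G(g,g_0)=d_M(p,q)$. Writing $g=g_0 u$ with $u=g_0^{-1}g$, left-invariance of $d_G$ gives $d_G(u,0)=d_M(p,q)$. Iterating~\eqref{pushforward} along any monomial yields $\tilde X^I \tilde f=(X^I f)\circ\Pi$ for every multi-index $I$, so $\tilde f\in C^k_{\mathrm{hor}}(G)$, and Theorem~\ref{Taylor} applied at $g_0$ produces
\[
|\tilde f(g_0 u)-\tilde P_k(\tilde f,g_0)(g_0 u)|
\le C_k\, d_G(u,0)^k\, \tilde\eta_G(g_0,b^k d_G(u,0)).
\]

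Next I would identify both sides downstairs. Since $\tilde f=f\circ\Pi$ and $\Pi(g_0 u)=p$, the left-hand side is $|f(p)-\tilde P_k(\tilde f,g_0)(g)|$. By Proposition~\ref{prop:compatibility} the polynomial $\tilde P_k(\tilde f,g_0)$ is left $H$-invariant and descends, via $\Pi$, to $P_k(f,q)$ on $M$; in particular $\tilde P_k(\tilde f,g_0)(g)=P_k(f,q)(\Pi(g))=P_k(f,q)(p)$, so the left-hand side is exactly $|f(p)-P_k(f,q)(p)|$. For the right-hand side, fix $I$ with $d(I)=k$ and set $\Phi(s):=\tilde X_{\mathrm{hor}}^I \tilde f(s)-X_{\mathrm{hor}}^I f(q)$. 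Using $\tilde X^I\tilde f=(X^I f)\circ\Pi$, we have $\Phi(s)=X_{\mathrm{hor}}^I f(\Pi(s))-X_{\mathrm{hor}}^I f(q)$, which is left $H$-invariant, and Lemma~\ref{lem:sup-transfer} at radius $r=b^k d_M(p,q)$ gives
\[
\sup\{|\Phi(s)|:d_G(s,g_0)\le r\}
= \sup\{|X_{\mathrm{hor}}^I f(n)-X_{\mathrm{hor}}^I f(q)|:d_M(n,q)\le r\}.
\]
Taking the supremum over multi-indices with $d(I)=k$ yields $\tilde\eta_G(g_0,b^k d_M(p,q))=\eta_M(q,b^k d_M(p,q))$, and substituting into the estimate above produces~\eqref{eq:Taylor-M} with the same constants $C_k,b$ as in Theorem~\ref{Taylor}.

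The main technical point is the identity $\tilde X^I\tilde f=(X^I f)\circ\Pi$: without it, neither the $H$-invariance of $\Phi$ nor the appeal to Lemma~\ref{lem:sup-transfer} would go through, and the sup on $G$ could not be replaced by the intrinsic sup on $M$. This identity follows inductively from~\eqref{pushforward}, since each application produces a function of the form $(\,\cdot\,)\circ\Pi$ to which the next left-invariant $\tilde X_j$ can be applied, but it should be recorded carefully before invoking Theorem~\ref{Taylor} on $\tilde f$. Once this is in place, the argument is a clean transcription of the Carnot-group estimate, exactly mirroring the proof strategy used for Theorem~\ref{thm:Lagrange-quotient}.
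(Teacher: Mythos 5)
Your proposal is correct and follows essentially the same route as the paper: lift $f$ to $\tilde f = f\circ\Pi$, use Proposition~\ref{submetrylift} to realise $d_M(p,q)$ as $d_G(g,g_0)$, identify $P_k(f,q)$ with $\tilde P_k(\tilde f,g_0)\big|_M$ via Proposition~\ref{prop:compatibility}, apply Theorem~\ref{Taylor} on $G$, and convert $\tilde\eta_G$ into $\eta_M$ with Lemma~\ref{lem:sup-transfer}. Your explicit remark that $\tilde X^I\tilde f=(X^I f)\circ\Pi$ must be established by iterating \eqref{pushforward} is a useful point that the paper leaves implicit, but it does not change the argument.
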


\begin{proof}
Fix $f\in C^k_{\mathrm{hor}}(M)$ and $q\in M$.  Define the left $H$-invariant lift
\(
\tilde f = f\circ \Pi \in C^k_{\mathrm{hor}}(G).
\)
Choose any $g_0\in G$ with $\Pi(g_0)=q$.
Let $\tilde P_k(\tilde f,g_0)$ be the  Taylor polynomial of degree $k$ for $\tilde f$
centered at $g_0$, and let $P_k(f,q)$ be the Taylor polynomial on $M$.
By Proposition~\ref{prop:compatibility}, \(
P_k(f,q) = \tilde P_k(\tilde f,g_0)\big|_{M}.
\)
Now fix $p\in M$. 
By Proposition~\ref{submetrylift}, there exists
$g\in G$ such that
\(
\Pi(g)=p\) and \( d_G(g,g_0)=d_M(p,q).
\)
Then
\[
f(p)-P_k(f,q)(p)
=
\tilde f(g) - \tilde P_k(\tilde f,g_0)(g).
\]
Applying Theorem~\ref{Taylor} to $\tilde f$ and the pair $(g,g_0)$ yields
\begin{equation}\label{eq:TQ-step1}
\big| f(p)-P_k(f,q)(p)\big|
\leq
C_k\, d_G(g,g_0)^k\, \tilde \eta_G\big(g_0,b^k d_G(g,g_0)\big).
\end{equation}
Since $d_G(g,g_0)=d_M(p,q)$ by construction, the factor in front is exactly
$C_k d_M(p,q)^k$.  It remains to rewrite the quantity $\tilde \eta_G$ in terms of $\eta_M$. This is done by applying  Lemma~\ref{lem:sup-transfer} to $\phi(n) = X_{\mathrm{hor}}^I f(n)-X_{\mathrm{hor}}^I f(q)$ and $\Phi(\cdot) = \tilde X_{\mathrm{hor}}^I \tilde f(\cdot)-\tilde X_{\mathrm{hor}}^I \tilde f(g_0)$.


\end{proof}

The following corollaries are proved similarly from the corresponding results on Carnot groups.
\begin{corollary}[Lagrange remainder for $M$]
   For  $ f\in C^{k+1}_{\mathrm{hor}}(M)$, there is $C_k'$ so that
  $$
    | f(p)- P_k( f,q)(p)| \leq C'_k d_M(p,q)^{k+1}
    \sup\{|X^I f(s)|\,:\, d(I)=k+1, \, d_M(q,s)\leq b^{k+1}d_M(q,p)\},
    $$
    with $b$ as in the previous theorem.
\end{corollary}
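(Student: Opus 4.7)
The plan is to mirror the proof of Theorem~\ref{TaylorQuotients}, reducing the estimate to the Lagrange remainder corollary already established on $G$. First, lift $f$ to $\tilde f := f\circ\Pi \in C^{k+1}_{\mathrm{hor}}(G)$, which is left $H$-invariant. Fix $q\in M$, pick $g_0\in G$ with $\Pi(g_0)=q$, and invoke Proposition~\ref{submetrylift} to choose, for the given $p\in M$, a representative $g\in G$ with $\Pi(g)=p$ and $d_G(g,g_0)=d_M(p,q)$. By Proposition~\ref{prop:compatibility} we have $P_k(f,q)(p)=\tilde P_k(\tilde f,g_0)(g)$, so
$$
f(p)-P_k(f,q)(p) \;=\; \tilde f(g)-\tilde P_k(\tilde f,g_0)(g).
$$

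Next, apply the Lagrange remainder corollary on $G$ to $\tilde f$ at the pair $(g,g_0)$. Using $d_G(g,g_0)=d_M(p,q)$, this yields
$$
|f(p)-P_k(f,q)(p)| \;\leq\; C'_k\, d_M(p,q)^{k+1}\;
\sup\bigl\{|\tilde X^I_{\mathrm{hor}}\tilde f(u)|\,:\, d(I)=k+1,\; d_G(u,g_0)\leq b^{k+1} d_M(p,q)\bigr\}.
$$

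Finally, convert the supremum over the $G$-ball centred at $g_0$ into a supremum over the $M$-ball centred at $q$ by Lemma~\ref{lem:sup-transfer}. For each fixed multi-index $I$ with $d(I)=k+1$, the function $\tilde X^I_{\mathrm{hor}}\tilde f$ is left $H$-invariant: indeed, $\tilde f$ is left $H$-invariant and each $\tilde X_j$ is left-invariant on $G$, so left translation by any $h\in H$ commutes with the iterated operator $\tilde X^I_{\mathrm{hor}}$ acting on $\tilde f$. Moreover, by~\eqref{pushforward} one has $\tilde X^I_{\mathrm{hor}}\tilde f = (X^I_{\mathrm{hor}} f)\circ \Pi$, hence the restriction along $\iota$ is precisely $X^I_{\mathrm{hor}} f$. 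Applying Lemma~\ref{lem:sup-transfer} with $\Phi := \tilde X^I_{\mathrm{hor}}\tilde f$ and $\phi := X^I_{\mathrm{hor}} f$, and then taking the supremum over the finite set of multi-indices with $d(I)=k+1$, replaces the $G$-sup by the corresponding $M$-sup, giving the desired inequality with the same constants $C'_k$ and $b$.

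The main obstacle is essentially bookkeeping: verifying the left $H$-invariance of each $\tilde X^I_{\mathrm{hor}}\tilde f$ (which follows formally from left-invariance of the frame on $G$ and $H$-invariance of $\tilde f$), and observing that Lemma~\ref{lem:sup-transfer} commutes with the supremum over the finite index set $\{I:d(I)=k+1\}$. Once these two small verifications are in place, the proof is a direct transfer from the $G$-statement.
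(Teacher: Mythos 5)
Your proof is correct and follows exactly the strategy the paper intends: it mirrors the proof of Theorem~\ref{TaylorQuotients}, lifting $f$ to $\tilde f = f\circ\Pi$, using Proposition~\ref{submetrylift} and Proposition~\ref{prop:compatibility} to reduce to the Lagrange remainder corollary on $G$, and then transferring the supremum via Lemma~\ref{lem:sup-transfer}. The paper itself only states that the corollary is ``proved similarly from the corresponding results on Carnot groups,'' and your argument is a faithful, correctly detailed version of that intended proof.
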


\begin{corollary}[Peano reminder]
   If $ f\in C^{k+1}_{\mathrm{hor}}(M)$, then
   $$
    f(p) =  P_k( f,q)(p) + O_{p}(d_M(p,q)^{k+1}).
   $$
\end{corollary}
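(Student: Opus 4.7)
The plan is to transfer the Peano estimate from $G$ to $M$ by the same lift-and-project strategy used for Theorem~\ref{TaylorQuotients}, since every ingredient is already in place. Concretely, I would set $\tilde f := f\circ\Pi$ and observe, using the push-forward identity \eqref{pushforward}, that $\tilde X_{\mathrm{hor}}^I \tilde f = (X_{\mathrm{hor}}^I f)\circ \Pi$ for every horizontal multi-index $I$. Since $f\in C^{k+1}_{\mathrm{hor}}(M)$, this shows $\tilde f\in C^{k+1}_{\mathrm{hor}}(G)$, so the Peano corollary on $G$ applies to $\tilde f$.

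Next, fix $q\in M$, choose any $g_0\in G$ with $\Pi(g_0)=q$, and invoke Proposition~\ref{prop:compatibility} to identify the Taylor polynomials, namely $P_k(f,q)=\tilde P_k(\tilde f,g_0)\big|_M$. Given $p\in M$, Proposition~\ref{submetrylift} produces a lift $g\in G$ with $\Pi(g)=p$ and the exact equality $d_G(g,g_0)=d_M(p,q)$. This turns the remainder on $M$ into a remainder on $G$:
\[
f(p)-P_k(f,q)(p)=\tilde f(g)-\tilde P_k(\tilde f,g_0)(g),
\]
and the Peano corollary on $G$ bounds the right-hand side by $O_{g_0}\bigl(d_G(g,g_0)^{k+1}\bigr)=O_q\bigl(d_M(p,q)^{k+1}\bigr)$.

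Alternatively, one may skip the lift and derive the Peano form directly from the Lagrange remainder corollary on $M$: continuity of $X_{\mathrm{hor}}^I f$ for $d(I)=k+1$ ensures that the supremum appearing there is bounded on a neighborhood of $q$, which immediately yields the $O_q$ estimate. Either route works; the first is cleaner since it relies only on results already established.

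The only subtlety I anticipate is the uniformity implicit in the $O_q$ symbol. The constant in Peano on $G$ may depend on $g_0$ through the neighborhood on which the $(k+1)$-order horizontal derivatives of $\tilde f$ are controlled. However, by left $H$-invariance of $\tilde f$ these derivatives descend to $M$, and Lemma~\ref{lem:sup-transfer} identifies the relevant suprema over $d_G$-balls around $g_0$ with suprema over $d_M$-balls around $q$; thus the dependence genuinely lies in $q$ alone, matching the $O_q$ notation in the statement. No other step is expected to cause difficulty.
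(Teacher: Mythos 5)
Your proposal is correct and follows exactly the lift-and-project strategy the paper intends: the paper gives no separate argument here, stating only that the corollaries ``are proved similarly from the corresponding results on Carnot groups,'' which is precisely what you carry out (lift via $\Pi$, identify Taylor polynomials by Proposition~\ref{prop:compatibility}, lift the point by Proposition~\ref{submetrylift}, and invoke the Peano corollary on $G$). Your closing remark that the implicit constant depends on the centre $q$ rather than on $p$ is also a fair observation about the notation in the statement.
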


\subsection{Example}
 Let $G$ be the filiform group of step $4$, with generators 
  \begin{align*}
\tilde X_1 &= \frac{\partial}{\partial x_1}\\
\tilde Y_1 &=  \frac{\partial}{\partial y_1} + x_1  \frac{\partial}{\partial y_2}+ \frac{x_1^2}{2} \frac{\partial}{\partial y_{3}}+ \frac{x_1^3}{6}\frac{\partial}{\partial x_{2}}.
\end{align*}
A straightforward calculation obtained by applying the definition of McLaurin polynomial yields that the polynomial of degree $2$ for a function $F$ is 
 \begin{align*}
     \tilde P = F(0)&+\tilde X_1\,F(0) x_1 + \tilde Y_1\, F(0)y_1 +\frac{\tilde X_1^2\,F(0)}{2}x_1^2 +\frac{\tilde Y_1^2\,F(0)}{2}y_1^2\\
 &+ {\tilde Y_1 \tilde X_1\,F(0)}x_1 y_1 +(\tilde X_1\tilde Y_1-\tilde Y_1 \tilde X_1)F(0)y_2.
  \end{align*}
Proceeding as in Example (1) in Section~\ref{examples} we project $G$ onto the Grushin plane. This yields
  \begin{align*}
 X_1 &= \frac{\partial}{\partial x_1}\\
 Y_1 &=   \frac{x_1^3}{6}\frac{\partial}{\partial x_{2}},
\end{align*}
and the McLaurin polynomial of degree $2$ for a function $f$ on $M$ is 
 \begin{align*}
      P = f(0)&+ X_1\,f(0) x_1  +\frac{ X_1^2\,f(0)}{2}x_1^2.
  \end{align*}
\subsection{Example}
We consider once more the filiform Lie algebra of step $4$, namely $\mathfrak{g}={\rm span}\{e_1,\dots,e_4\}$ with nonzero brackets $[e_1,e_2]=e_3$ and $[e_1,e_3]=e_4$. This time we define exponential coordinates
$$
(x_1,x_2,x_3,x_4) =\exp\left(x_3X_3 \right)\exp\left(x_1X_1+x_2X_2+x_4X_4 \right).
$$
The left invariant vector fields corresponding to $e_1$ and $e_2$ in these coordinates are
\begin{align*}
    \tilde X_1 &= \partial_{x_1}-\frac{1}{2}x_2\partial_{x_3}-\frac{1}{3}x_1x_2\partial_{x_4}\\
    \tilde X_2 &= \partial_{x_2}+\frac{1}{2}x_1\partial_{x_3}+\frac{1}{3}x_1^2\partial_{x_4}.
\end{align*}
Set $H$ be the subgroup generated by $e_3$, then consider $M\simeq H\backslash G$ and $\Pi:G\to M$. Hence
\begin{align*}
\Pi_*(\tilde X_1)=X_1 &= \partial_{x_1}-\frac{1}{3}x_1x_2\partial_{x_4}\\
   \Pi_*(\tilde X_2)=  X_2&=\partial_{x_2}+\frac{1}{3}x_1^2\partial_{x_4}.
\end{align*}
The sub-Laplacian $X_1^2+X_2^2$ is singular at $x_1=0$, corresponding to the fact that the distribution generated by $X_1$ and $X_2$ has step $2$ away from $x_1=0$ and step $3$ otherwise. In order to compute the Taylor polynomial of a smooth function $f:M\to \mathbb R$ centered at $p_0=(x_1^0,x_2^0,x_4^0)$ at a point $p=(x_1,x_2,x_4)$, we need to compute the Taylor polynomial of the function $F=f\circ \Pi:G\to \mathbb R$ centered at $g_0=(x_1^0,x_2^0,0,x_4^0)$, and then compute it at $g=(x_1,x_2,0,x_4)$.

Given a smooth function $f$ on $M$, its Taylor polynomial of degree $3$ centred at a point $q=(y_1,y_2,y_4)$ has the form
$$
P(x_1,x_2,x_4) = a+b_1x_1 + b_2x_2+c_1x_1x_2+c_2x_1^2+c_3x_2^2 +d_1x_1^2x_2+ d_2x_1x_2^2+d_3x_1^3+d_4x_2^3.
$$
The conditions $X_{\mathrm{hor}}^IP(q)=X_{\mathrm{hor}}^If(q)$ for all multi-indices $I$ such that $d(I)\leq k$ determine the coefficients of $P$ uniquely. Alternatively, one can start with a generic polynomial $\tilde P$ of degree $3$ on $G$ and require that it is the Taylor polynomial centered at  $g_0 = (y_1,y_2,y_3,y_4)$, $y_3$  arbitrarily fixed, for the function $\tilde f = f\circ \Pi$. In particular, since $\tilde f$ is left $H$-invariant, one finds that necessarily $\tilde Y_3\tilde P = [\tilde Y_1,\tilde Y_2]\tilde P = 0$, where $\tilde Y_j$ are the right invariant vector fields coinciding with $\tilde X_j$ at the identity. This condition and the fact that the right-invariant vector fields commute with the left-invariant vector fields ensure that the polynomial $\tilde P$ is independent on the variable $x_3$.
Then the final step is to impose $P = \tilde P|_M$, which make sure that the result does not depend on the representative $g_0$ (namely, the choice of $y_3$). Explicitly, 
\begin{align*}
\tilde P(x)
&= a + b_1 x_1 + b_2 x_2
+ c_1 x_1^2 + c_2 x_2^2 + c_3 x_1 x_2 + c_4 x_3 \\
&\quad
+ d_1 x_1^2 x_2 + d_2 x_1 x_2^2 + d_3 x_1^3 + d_4 x_2^3
+ d_5 x_1 x_3 + d_6 x_2 x_3.
\end{align*}

We now show, using only the invariance $\partial_{x_3} \tilde f=0$ and the defining Taylor equalities at $g_0$,
that
\[
c_4 = d_5 = d_6 = 0.
\]

First, note that
\[
\partial_{x_3} P(x) = c_4 + d_5 x_1 + d_6 x_2.
\]
The right-invariant vector field $\partial_{x_3}$ commutes with the left-invariant fields
$\tilde X_1,\tilde X_2$, so for $j=1,2$ we have
\[
\tilde X_j(\partial_{x_3} \tilde f) = \partial_{x_3}(\tilde X_j \tilde f).
\]
Since $\partial_{x_3} \tilde f = 0$, it follows that
\[
0 = \tilde X_1(\partial_{x_3} \tilde f)(g_0) = \partial_{x_3}(\tilde X_1 \tilde f)(g_0),
\qquad
0 = \tilde X_2(\partial_{x_3} \tilde f)(g_0) = \partial_{x_3}(\tilde X_2 \tilde f)(g_0).
\]
By definition of the Taylor polynomial, at the point $g_0$ we have
\[
\tilde X_j\tilde f(g_0) = \tilde X_j P(g_0),\qquad j=1,2,
\]
hence
\[
0 = \partial_{x_3}(\tilde X_j P)(g_0) = \tilde X_j(\partial_{x_3} P)(g_0),
\qquad j=1,2.
\]

Explicitly:
\[
\tilde X_1(\partial_{x_3} P)
=
\left(\partial_{x_1} - \frac{x_2}{2}\partial_{x_3} - \frac{x_1 x_2}{3}\partial_{x_4}\right)
(c_4 + d_5 x_1 + d_6 x_2)
= d_5.
\]
Evaluating at $g_0$ yields
\[
0 = \tilde X_1(\partial_{x_3} P)(q) = d_5,
\]
so $d_5=0$.

Similarly, computing $\tilde X_2(\partial_{x_3} P)(g_0)$ yields $d_6=0$.


At the point $g_0$ the invariance $\partial_{x_3}\tilde f=0$ gives
\[
0 = \partial_{x_3}\tilde f(g_0) = \partial_{x_3} P(g_0).
\]
Using the expression for $\partial_{x_3} P$ and the fact that $d_5=d_6=0$, we obtain
\[
\partial_{x_3} P(g_0) = c_4 + d_5 y_1 + d_6 y_2 = c_4,
\]
hence $c_4=0$.
\begin{remark}
    As the example above suggests, the condition for a sufficiently smooth function $F:G\to \mathbb R$ to be left $H$-invariant coincides with the requirement $\tilde Y F = 0$ for all right invariant vector fields corresponding to vectors in $\mathfrak h$. 
\end{remark}



\section{Applications}\label{app}

\subsection{Sufficient condition for analytic functions}
\begin{theorem}
Let $f\in C^{\infty}(M)$ and suppose that for every $p\in M$ there exist $\rho,K>0$ such that
$$
\sup\left\{
|X_{\mathrm{hor}}^I f(q)|\,:\, d_M(p,q)<\rho
\right\}\leq K^{d(I)}k!\qquad \forall I,\forall k\in \mathbb N,
$$
then $f$ is real analytic.
\end{theorem}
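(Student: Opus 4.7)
The plan is to reduce to the analogous real analyticity criterion for Carnot groups from~\cite{Bonfiglioli}, using the same lift-and-transfer strategy that worked for Theorem~\ref{TaylorQuotients}. Specifically, I set $\tilde f := f\circ\Pi\in C^{\infty}(G)$, verify that $\tilde f$ satisfies the Carnot-group version of the growth hypothesis at every point, conclude that $\tilde f$ is real analytic on $G$ in exponential coordinates, and then obtain the analyticity of $f$ on $M\simeq\mathbb R^m$ by restricting to the slice $\iota(M)$.

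The key preparatory observations are as follows. First, $\tilde f$ is left $H$-invariant by construction, and since left-invariant vector fields commute with all left translations, in particular those by elements of $H$, the function $\tilde X_{\mathrm{hor}}^{I}\tilde f$ is left $H$-invariant for every multi-index $I$. Second, iterating the intertwining identity~\eqref{pushforward} gives
\[
\tilde X_{\mathrm{hor}}^{I}\tilde f(g) \;=\; X_{\mathrm{hor}}^{I}f(\Pi(g))\qquad\forall g\in G,\ \forall I.
\]
Third, fix any $g_{0}\in G$ and set $p:=\Pi(g_{0})$. By the hypothesis applied at $p$, there exist $\rho,K>0$ such that
\[
\sup\bigl\{|X_{\mathrm{hor}}^{I}f(q)|:d_{M}(p,q)<\rho\bigr\}\le K^{d(I)}d(I)!\qquad\forall I.
\]
Applying Lemma~\ref{lem:sup-transfer} with $\phi=X_{\mathrm{hor}}^{I}f$ and $\Phi=\tilde X_{\mathrm{hor}}^{I}\tilde f$ (both of which are, respectively, a function on $M$ and its left $H$-invariant lift to $G$, thanks to the first two observations), the previous bound becomes
\[
\sup\bigl\{|\tilde X_{\mathrm{hor}}^{I}\tilde f(g)|:d_{G}(g,g_{0})<\rho\bigr\}\le K^{d(I)}d(I)!\qquad\forall I.
\]
Since $g_{0}$ was arbitrary, the lift $\tilde f$ satisfies at every point of $G$ the growth condition which is the Carnot-group version of the stated hypothesis.

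With the transfer in place, the real analyticity criterion on Carnot groups from~\cite{Bonfiglioli} applies and yields that $\tilde f$ is real analytic on $G$ in the exponential coordinates~\eqref{expcoords}. Finally, in these coordinates the inclusion $\iota:M\hookrightarrow G$, $(x_{1},\dots,x_{m})\mapsto(0,\dots,0,x_{1},\dots,x_{m})$, is a linear (hence real analytic) embedding, and $f=\tilde f\circ\iota$; composition of real analytic maps is real analytic, so $f$ is real analytic on $M\simeq\mathbb R^{m}$.

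The one substantive step, and the place where care is needed, is the transfer of the derivative bounds from $M$ to $G$. This relies crucially on the left $H$-invariance of $\tilde X_{\mathrm{hor}}^{I}\tilde f$, which in turn rests on the commutation of left-invariant vector fields with left translations by $H$; and on the sup-equality of Lemma~\ref{lem:sup-transfer}, which rests on the submetry property of $\Pi$ from Proposition~\ref{submetrylift}. Once this is secured, the analytic conclusion is purely a consequence of the known Carnot-group result plus the observation that the cross-section $\iota(M)$ is an analytic submanifold of $G$ in the chosen coordinates.
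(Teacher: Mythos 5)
Your proposal is correct and follows essentially the same route as the paper: lift $f$ to $\tilde f=f\circ\Pi$, use Lemma~\ref{lem:sup-transfer} (with $\phi=X_{\mathrm{hor}}^{I}f$ and $\Phi=\tilde X_{\mathrm{hor}}^{I}\tilde f$) to transfer the derivative bounds to balls in $G$, invoke the Carnot-group analyticity criterion of~\cite{Bonfiglioli}, and restrict to the slice. Your write-up is in fact more careful than the paper's, since you explicitly verify the left $H$-invariance of $\tilde X_{\mathrm{hor}}^{I}\tilde f$ needed to apply the lemma and justify the final restriction step.
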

\begin{proof}
 The result follows from a corresponding result for Carnot groups proved in~\cite{Bonfiglioli}.
       Let $\tilde f=f\circ\Pi$ and $g\in G$ such that $\Pi(g)=p$.
       Apply Lemma~\ref{lem:sup-transfer} to $\phi = X_{\mathrm{hor}}^I f$ and $\Phi = \tilde X_{\mathrm{hor}}^I \tilde f$.
      Then the hypothesis is equivalent to 
   $$
   \sup\left\{
|\tilde X^I \tilde f(u)|\,:\, d_G(g,u)<r
\right\}\leq K^{d(I)}k!\qquad \forall I,\forall k\in \mathbb N.
   $$
     The Carnot group result implies that $\tilde f$ is real analytic, and therefore $f$ is analytic too.
  
\end{proof}

\subsection{L-harmonicity of Taylor polynomials}
Let $L$ be a differential operator of the form
 $$
 L = \sum_{k} \sum_{\substack{\ell(I)=k\\d(I)=\sigma}} c_I  X^I.
 $$
\begin{theorem}
    Suppose $L f=0$ for some $f\in C^\infty (M)$. Then $L(P_n(f,p))=0$ for every $n>0$ and $p\in M$.
\end{theorem}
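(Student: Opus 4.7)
The plan is to reduce to the analogous $L$-harmonicity theorem for Carnot groups proved in \cite{Bonfiglioli} via the lift--project mechanism already used throughout the paper: lift the hypothesis $Lf=0$ to $G$, apply the group result to the lift of $f$, and then project the conclusion back to $M$ through Proposition~\ref{prop:compatibility}.

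First I set $\tilde f := f\circ\Pi$ and define the lifted operator
\[
\tilde L := \sum_k \sum_{\substack{\ell(I)=k\\ d(I)=\sigma}} c_I\, \tilde X^I
\]
on $G$. The pushforward identity \eqref{pushforward} gives $\tilde X_j(u\circ\Pi)=(X_j u)\circ\Pi$ for every $j$ and every smooth $u:M\to\mathbb R$; iterating on the length of $I$ promotes this to $\tilde X^I(u\circ\Pi)=(X^I u)\circ\Pi$. Multiplying by $c_I$ and summing yields the key identity $\tilde L(u\circ\Pi)=(Lu)\circ\Pi$ on $G$; applied to $u=f$, it shows that $\tilde f$ is $\tilde L$-harmonic on $G$.

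Next I would fix $p\in M$, choose any $g_0\in G$ with $\Pi(g_0)=p$, and invoke the Carnot-group $L$-harmonicity theorem of \cite{Bonfiglioli} for $\tilde L$ and $\tilde f$, obtaining $\tilde L\bigl(\tilde P_n(\tilde f,g_0)\bigr)\equiv 0$ on $G$ for every $n$. Proposition~\ref{prop:compatibility} then identifies $\tilde P_n(\tilde f,g_0) = P_n(f,p)\circ\Pi$ on all of $G$. Applying the lift--project identity a second time, now with $u=P_n(f,p)$, turns this into $\bigl(L\,P_n(f,p)\bigr)\circ\Pi \equiv 0$; surjectivity of $\Pi$ then delivers $L\,P_n(f,p)\equiv 0$ on $M$, which is the claim.

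The main (and essentially only) conceptual step is the algebraic compatibility $\tilde X^I(u\circ\Pi)=(X^I u)\circ\Pi$, which is the backbone of the entire lift--project scheme and follows by a clean induction from \eqref{pushforward}; once this is granted, the theorem is a straightforward sandwich around the Carnot-group statement of \cite{Bonfiglioli}, with Proposition~\ref{prop:compatibility} supplying the precise dictionary between $\tilde P_n(\tilde f,g_0)$ and $P_n(f,p)$.
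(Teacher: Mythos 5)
Your proposal is correct and follows exactly the strategy the paper itself invokes (the paper's proof is only a one-line appeal to ``the corresponding result for Carnot groups and our usual lifting strategy''); you have simply written out the details: the intertwining $\tilde X^I(u\circ\Pi)=(X^I u)\circ\Pi$, the application of the Bonfiglioli theorem to $\tilde L$ and $\tilde f$, and the identification $\tilde P_n(\tilde f,g_0)=P_n(f,p)\circ\Pi$ from Proposition~\ref{prop:compatibility}. No gaps.
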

\begin{proof}
    This follows from the corresponding result for Carnot groups proved in~\cite{Bonfiglioli} and our usual strategy for lifting the statement to the Carnot group $G$ for which $\Pi(G)=M$. 
\end{proof}




\end{document}